\journal{Applied Numerical Mathematics}
\begin{document}

\begin{frontmatter}



\title{Numerical Simulation of Bloch Equations for \\ Dynamic Magnetic Resonance Imaging}


\author{Arijit~Hazra\corref{cor1}}
\ead{arijit.hazra@stud.uni-goettingen.de}
\author{Gert~Lube\corref{cor1}}
\ead{lube@math.uni-goettingen.de}
\cortext[cor1]{Corresponding authors}
\author{Hans-Georg~Raumer}
\ead{hansgeorg.raumer@stud.uni-goettingen.de}
\address{Institute for Numerical and Applied Mathematics,\\ 
         Georg-August-University G\"{o}ttingen, D-37083, Germany}

\begin{abstract}
Magnetic Resonance Imaging (MRI) is a widely applied non-invasive imaging modality based 
on non-ionizing radiation which gives excellent images and soft tissue contrast of living tissues.
We consider the modified Bloch problem as a model of MRI for flowing spins in an incompressible 
flow field. After establishing the well-posedness of the corresponding evolution problem, we 
analyze its spatial 
semi-discretization using discontinuous Galerkin methods. The high frequency time evolution 
requires a proper explicit and adaptive temporal discretization. The applicability of the 
approach is shown for basic examples. 
\end{abstract}

\begin{keyword}
Magnetic resonance imaging \sep Bloch model \sep FLASH-technology \sep flowing spins \sep 
incompressible medium \sep discontinuous Galerkin method \sep explicit Runge-Kutta methods



\end{keyword}

\end{frontmatter}


\section{Bloch Model for Magnetic Resonance Imaging}\label{sec:MRI}
Magnetic Resonance Imaging (MRI) is a non-invasive imaging modality based on non-ionizing radiation 
\cite{nishimura1996principles}.
It gives excellent images and soft tissue contrast of living tissues. 
During the experiment, the object to be studied is placed in a static magnetic field of high strength $B_0$. 
This induces a macroscopic nuclear magnetization $\vec M_0$ in the direction of magnetic field, known 
as the equilibrium magnetization. 
The direction of equilibrium magnetization is called the longitudinal direction, generally denoted by
the $z$-axis as in \Cref {fig:MRPhenomena}.  

In order to get a response from the object, the equilibrium magnetization is perturbed by applying a 
short radio-frequency (RF) pulse $\vec B_1$ in the transverse plane (denoted by the $xy$-plane in 
\Cref{fig:MRPhenomena}) with the excitation carrier frequency of the RF pulse equal to the Larmor 
frequency of {protons}. As a result, the equilibrium magnetization is flipped from its initial position 
towards the transverse plane. The perturbation of equilibrium magnetization depends on the duration 
and magnitude of the RF field. After the RF pulse is switched off, the magnetization precesses towards 
its equilibrium position. This process is known as relaxation. The relaxation of magnetization is 
governed by two time constants -
 spin-lattice time relaxation $T_1$, spin-spin 
time relaxation $T_2$, as illustrated in \Cref{fig:MRPhenomena}.

\begin{figure}[t]
  \centering
  \includegraphics[width=0.90\columnwidth]{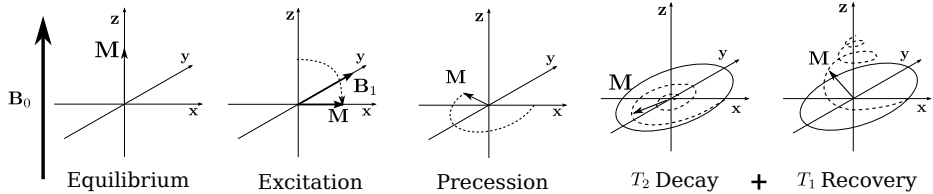}
    \caption{Schematic of a pulsed MR experiment on net Magnetization $\mathbf{M}$. Left: $\mathbf{M}$ 
       align along the static magnetic field  $B_0$. Second from left: RF excitation perturbs the magnetization 
       from the longitudinal direction. Middle: Precession of magnetization as a result of,
       Second from right: $T_2$ decay. Right: $T_1$ recovery.}
    \label{fig:MRPhenomena}
\end{figure}

%
%
%

{ On top of that magnetic field gradients need to be applied to obtain cross-sectional images. The location and thickness of the slice is determined by applying a slice-selection gradient $G_z$. After that, the object is spatially encoded via the application 
of additional magnetic field gradients $G_x, G_y$ in the transversal directions.
The magnitude and time of application of magnetic gradients $\vec G$ depend on the experimental 
requirements, see \Cref{sec:experiments} for the description of a typical pulse sequence.

The \textit{Bloch equation} \cite{Bloch1946nuclear} for MRI combines all of the above components: static magnetic field $\mathbf{B_0}$, time-dependent magnetic gradients $\vec G(t)$ and the RF pulse $\vec B_{1}(t)=(B_x(t), B_y(t), 0)^T$ and is given by
\begin{align}
\dod{\vec{M}}{t}&=\gamma \vec{M}\times \vec B + 
\frac{M_0-M_z}{T_1} \hat{e}_z - \frac{M_x}{T_2}\hat{e}_x - \frac{M_y}{T_2}\hat{e}_y,\\
\vec B(t,\vec r) &= (B_{x}(t), B_{y}(t), B_z(t,\vec r))^T, \quad
B_z(t,\vec r) := B_0+\vec G(t)\cdot \vec r \nonumber.
\end{align}
%
Emitted energy due to the precession of the magnetizations is converted into an electric signal in the receiver coil of an MRI system which is manipulated further for image reconstruction.
The received signal can be expressed as 
\begin{align}\label{eq:recsig}
s_r(t)\varpropto \int_{\Omega}M_{xy}(\vec r,t)\e^{-i\gamma B_0t} \e^{-i\gamma\int_0^t (xG_x+yG_y) d\tau} d\vec{r}
\end{align}
where $M_{xy} = M_x + i M_y$.
The received signals are typically demodulated in frequency by $\gamma B_0$ using phase-sensitive detection before 
being used for image reconstruction. The resultant signal expression after demodulation is
$s_d(t) = s_r(t) \e^{i\gamma B_0 t}$ \cite{nishimura1996principles}.

The demodulated signal corresponds to the solution of the Bloch equation in a frame rotating clockwise 
about $z$-axis with an angular frequency $ \gamma B_0$, i.e, the Larmor frequency of the isocenter of the object. 
The Bloch equation in the rotating frame conceptually simplifies the RF excitation effect in MRI and eliminates the static $\vec B_0$ field from the expression of the external magnetic field and is given by
\begin{subequations}\label{eq:bloch}
  \begin{align}\label{eq:bloch1}
    \dod{\vec {M}'}{t} &= \gamma\vec {M}'\times \vec B_{\text{eff}}+
    \frac{(M_0-M_{z})\hat {e}_z}{T_1}- \frac{M_{x'} \hat {e}_{x'}+M_{y'} \hat{e}_{y'}}{T_2}\\    \label{eq:bloch2} 
    &= \begin{bmatrix}
      -\frac{1}{T_2} & \gamma B_{z'} & -\gamma B_{y'}  \\
      -\gamma B_{z'} & -\frac{1}{T_2}  & \gamma B_{x'} \\
      \gamma B_{y'} & -\gamma B_{x'}  & -\frac{1}{T_1} 
    \end{bmatrix} \begin{pmatrix}M_{x'} \\M_{y'} \\M_z \end{pmatrix}
    +\begin{pmatrix}0\\0\\ \frac{M_0}{T_1}\end{pmatrix},
  \end{align}
\end{subequations}
where $\mathbf{M}'(t,\vec r) = (M_{x'}, M_{y'}, M_{z})^T(t,\vec r)$ represents the magnetization in the rotating frame 
and the effective magnetic field is given by $\mathbf{B}_{\textnormal{eff}}(t,\vec r)=(B_{x'}, B_{y'}, B_{z'})^T(t,\vec r)$; $B_{z'}(t,\vec r) := G(t) \cdot \vec r$.}

%

In order to study the effect of fluid flow in an MRI experiment, the transport of magnetizations 
due to flow field $\vec u(t,\vec r)$ must be taken into account and is modeled by the modified Bloch equation \cite{Stejskal1965}:
\begin{align}\label{eq:blochflowlab}
  \dpd{\mathbf{M}}{t}+ (\mathbf u \cdot \boldsymbol\nabla)\mathbf{M}
    &= \gamma\mathbf{M} \times \mathbf {B}+
    \frac{(M_0-M_z)\hat e_z}{T_1}- \frac{M_x \hat e_x+M_y \hat e_y}{T_2}.
\end{align}
There will be no diffusion term as effect of diffusion is negligible, but see \Cref{sec:Well-posedness}.

Due to signal demodulation, the signal acquired from the flowing object in an
MRI experiment is equivalent to an equation where
magnetizations and the magnetic field ($\vec B_{\text{eff}}$) are written in the
rotating frame with velocity $\vec u(t,\vec r)$ kept in laboratory frame \cite{Lorthois2005numerical}:
\begin{align}\label{eq:blochflow}
  \dpd{\mathbf{M}'}{t}+(\mathbf u \cdot
  \boldsymbol\nabla)\mathbf{M}'=\gamma\mathbf{M}' \times \mathbf
  {B_{\text{eff}}}+\frac{(M_0-M_{z})\hat e_z}{T_1}- \frac{M_{x^\prime} \hat
    e_{x^\prime}+M_{y^\prime} \hat e_{y\prime}}{T_2}.
\end{align}
For notational simplicity the $'$ will be omitted in later sections. Hereafter, magnetization $\vec M $ 
will always be in the rotating frame and the velocity in the laboratory frame.

{ In the past, Bloch equation based MRI simulators were developed for variety of research 
directions e.g. optimizing MR sequences, artifact detection, testing image reconstruction techniques, 
design of specialized RF pulses and educational purposes. Also, multiple utilities of numerical 
simulations have been combined to produce a few general purpose MRI simulators e.g., 
\cite{Bittoun1984computer,Stoecker2010high,Hargreaves2004bloch,Benoit-Cattin2005simri,Xanthis2014mrisimul}. 
Accurate simulation of this initial-value problem is still challenging for the following reasons: very 
tiny time steps, sufficiently fine spatial resolution, and non-smooth data (e.g. gradient field $\vec G$). 
To overcome this difficulty, the execution speeds were improved using parallelization with message passing 
interface (MPI), e.g. \cite{Benoit-Cattin2005simri,Stoecker2010high}, or GPU e.g. \cite{Xanthis2014mrisimul}.
There are two prevailing techniques for the development of the simulator (i) a semi-analytical technique
 based on operator splitting as used, e.g.,  in \cite{Benoit-Cattin2005simri, Xanthis2014mrisimul} 
(ii) coupled approach with fast ODE-solvers \cite{Stoecker2010high}.

Similarly, there are many numerical studies on the influence of flow on MRI e.g., 
\cite{Yuan1987solution,Jou1996calculation,Lorthois2005numerical,Jurczuk2013computational}. Modified 
Bloch model was solved using multiple numerical strategies previously e.g., Jou et al. 
\cite{Jou1996calculation}, Lorthois et al. \cite{Lorthois2005numerical} first solved the flow field in 
a computational mesh using finite volume method (FVM) softwares and then studied the effect of flow 
on magnetization using finite difference method (FDM). 
Jurczuk et al. \cite{Jurczuk2013computational} solved \ref{eq:blochflow} by splitting the transport 
and the MR terms. In their work, the magnetization transport was calculated using lattice-Boltzmann 
method (LBM) and the reaction part was calculated using operator splitting techniques.

However, the previous studies lack a proper analysis of the Bloch model for flowing objects. 
\Cref{sec:Well-posedness} is devoted to prove the well-posedness of the modified Bloch model. 
The spatial semi-discretization with DG-methods is presented in \Cref{sec:semidisc} which, to 
the best of our knowledge, is also an addition to the literature related to Bloch model. 
This section was followed by consideration of the temporal discretization in \Cref{sec:temporal}. 
Verifications for simple problems for static and flowing objects are presented
in \Cref{sec:experiments}.}


\section{Well-posedeness of the Bloch Model} \label{sec:Well-posedness}
Let $\Omega \subset \mathbb{R}^3$ be the flow domain with piecewise smooth Lipschitz boundary 
$\Gamma$ and outer normal $\vec n$. The incompressible flow field $\vec u (t,\vec r)$ introduces the 
splitting $\Gamma = \Gamma_{-}\bigcup \Gamma_{+} \bigcup \Gamma_{0}$ where
$\Gamma_{-}=\{\vec r \in \Gamma | \vec u \cdot \vec n <0 \}, \; 
\Gamma_{+}=\{\vec r \in \Gamma | \vec u \cdot \vec n > 0 \} \; \textnormal{and} \; 
\Gamma_{0}=\{\vec r \in \Gamma | \vec u \cdot \vec n = 0 \}$ 
represent the inflow boundary, outflow boundary and solid wall, respectively.
We assume that inflow and outflow are separated, i.e. $\mbox{dist}(\Gamma_-,\Gamma_+):=
\min_{(P,Q) \in \Gamma_- \times \Gamma_+}|P-Q| >0$.

Problem \labelcref{eq:blochflow} can be rewritten using 
$\vec B_{\textnormal{eff}}=\vec B = \begin{pmatrix} B_x,B_y,B_z \end{pmatrix}^T$, 
the relaxation time diagonal matrix $D = \diag{\frac{1}{T_2},\frac{1}{T_2}, \frac{1}{T_1}}$ 
and the constant source term $\vec f=\begin{pmatrix} 0, 0, \frac{M_0}{T_1}\end{pmatrix}^T$
along with appropriate boundary and initial conditions as
\begin{subequations}\label{eq:genblochflow}
	\begin{align}\label{eq:genblochflow1}
	\dpd{\vec M}{t} + (\mathbf u \cdot
	\boldsymbol\nabla)\mathbf{M} + \gamma \vec B \times \vec M + D \vec M &= \vec f, 
	\qquad (t,\vec r) \in [0,T] \times \Omega,\\
	\vec M &= \vec M_\Gamma, \quad (t,\vec r) \in [0,T] \times \Gamma_{-}, \label{eq:genblochflowb}\\ 
	\vec M &= \vec M^0 , \quad (t,\vec r) \in \{0\}\times \Omega. \label{eq:genblochflowi}
	\end{align}
\end{subequations}

Consider the space ${\mathbf H}:=[L^2(\Omega)]^3$ with inner product $(\vec u,\vec v)_{\mathbf H}:= 
\int_\Omega \vec u \cdot \vec v~ \dif{\vec r}$ and norm $\norm{\vec v}_{\mathbf H}:= \sqrt{(\vec v,\vec v)}$. 
Moreover, define
\begin{align}\label{eq:graphspace}
\mathbf{X} =\{ \vec N \in \mathbf{H}:~ (\mathbf u \cdot
\boldsymbol\nabla)\vec N\in {\mathbf H}\}
\end{align} 
with graph norm
\begin{align}\label{eq:graphnorm}
\| \vec N\|_{\mathbf X} := \left( \| (\vec u \cdot \nabla )\vec N\|^2_{\mathbf H} + 
\| \vec N\|^2_{\mathbf H} \right)^{\frac12}.
\end{align}
Multiplying \labelcref{eq:genblochflow1} by an arbitrary test function $\vec N \in \mathbf{X}$, 
integrating over $\Omega$ and imposing (\ref{eq:genblochflowb}) weakly, we obtain 
\begin{align}\label{eq:weakformulation}
&(\partial_t \vec M, \vec N)_{\mathbf{H}} + 
((\mathbf u \cdot \boldsymbol\nabla)\mathbf{M},\vec N)_{\mathbf{H}} + 
\gamma (\vec B\times \vec M,\vec N)_{\mathbf{H}}  \\ 
\nonumber
&\quad + (D \vec M,\vec N)_{\mathbf{H}}  + 
\int_{\Gamma} (\vec u \cdot \vec n)^{\ominus} \vec M \cdot \vec N \; \dif{s}  =    
(\vec f,\vec N)_{\mathbf{H}}  +
\int_{\Gamma} (\vec u \cdot \vec n)^{\ominus} \vec M_\Gamma \cdot \vec N \; \dif{s},
\end{align}
where $ w^\ominus(\vec r):= \frac{1}{2} (\abs{w(\vec r)} - w(\vec r))$ and 
$\vec M \times \vec B = - \vec B \times \vec M$.
Define bilinear and linear forms 
\begin{subequations}
	\begin{align}
	a(t;\vec M,\vec N)&:= ((\mathbf u \cdot \boldsymbol\nabla)\mathbf{M},\vec N)_{\mathbf{H}}   
	+ \gamma ((\vec B\times \vec M ),\vec N )_{\mathbf{H}} \nonumber \\ 
	&  + (D \vec M, \vec N)_{\mathbf{H}} + 
	\int_{\Gamma} (\vec u \cdot \vec n)^{\ominus} \vec M \cdot \vec N \dif{s}, \\
	l(\vec N)&:= (\vec f, \vec N)_{\mathbf{H}}   +
	\int_{\Gamma} (\vec u \cdot \vec n)^{\ominus} \vec M_\Gamma \cdot \vec N \dif{s},
	\end{align}
\end{subequations}
Then we obtain the variational form of the Bloch problem: Find $\vec M: (0,T] \to X$ s.t.
\begin{align}\label[problem]{prb:abslin}
(\partial_t \vec M, \vec N)_{\mathbf{H}}+a(t;\vec M,\vec N) & =l(\vec N), \quad 
\forall \; \vec N \in \mathbf{X}, \\
\label[problem]{prb:abslininit}
\eval[0]{\vec M}_{t=0}& =\vec M^0.
\end{align}
This is a Friedrichs system, see \cite{ern2004theory}, Sec. 7. 
Unfortunately, the theory in \cite{ern2004theory, Di2011mathematical} is not applicable since some 
coefficients are time-dependent. 
For $0 < \epsilon \ll 1$, we set $\mathbf{X}_\epsilon := [W^{1,2}(\Omega)]^3$ and consider an 
elliptic regularization of (\ref{prb:abslin}):  Find $\vec M_\epsilon: (0,T] \to X_\epsilon$ s.t.
\begin{align}\label[problem]{prb:epslin}
(\partial_t \vec M_\epsilon, \vec N)_{\mathbf{H}} +a_\epsilon(t;\vec M_\epsilon,\vec N) & =l(\vec N),\quad\forall \; 
\vec N \in \mathbf{X_\epsilon}, \\
\label[problem]{prb:epslininit}
\eval[0]{\vec M_\epsilon }_{t=0}& =\vec M^0.
\end{align}
with 
\begin{align}\label[problem]{prb:epslinA}
a_{\epsilon}(t;\vec M,\vec N) &:= a(t; \vec M, \vec N) + \epsilon (\nabla {\vec M},\nabla {\vec N})_{\mathbf{H}}.
\end{align}
Please note that the variational formulation of the regularized Bloch problem incorporates 
do-nothing boundary conditions $\epsilon \nabla \vec M_\epsilon \cdot \mathbf{n}= \vec 0$ on 
$\Gamma_0 \cup \Gamma_+$.

The spaces $\mathbf{X_\epsilon} \subseteq \mathbf{H}$ and the dual space $\mathbf{X_\epsilon}^*$ form 
an evolution triple 
$(\mathbf{X_\epsilon},\mathbf{H},\mathbf{X_\epsilon}^*)$.
For $p \ge 1$ and Banach space $\mathbf{Y}$ we denote by $L^p(0,T;\mathbf{Y})$ the Bochner space of 
vector-valued functions $\vec v: (0,T) \to \mathbf{Y}$. We look for a solution 
$\vec M_\epsilon \in L^\infty(0,T;\mathbf{H}) \cap L^2(0,T;\mathbf{X_\epsilon})$ of problem 
(\ref{prb:epslin})-(\ref{prb:epslininit}). 

\begin{theorem}[Well-posedness]\label{thm:genexistunique}
	For all $\epsilon >0$, for given $\vec u \in [L^\infty (0,T;W^{1,\infty}(\Omega)]^3$ with 
	$\mbox{div}~{\vec u}=0$ and $\vec B \in [L^\infty(0,T;\mathbf{H})]^3$, there exists a unique solution 
	$\vec M_\epsilon \in L^\infty(0,T;\mathbf{H}) 
	\cap L^2(0,T;\mathbf{X_\epsilon})$ to \Crefrange{prb:epslin}{prb:epslininit}. 
	For $t \in (0,T]$ and with $\sigma := \frac{1}{T_1}$, the following 
	a-priori estimate is valid
	\begin{align} \nonumber
	& \frac{1}{2}\norm{\vec M_\epsilon(t)}_{\mathbf{H}}^2 + \int_0^t \e^{\sigma(\tau -t)} \left[ \epsilon \norm{\nabla 
		\vec M_\epsilon(\tau)}^2_{\mathbf{H}} 
	+ \frac12\int_{\Gamma}|(\vec u \cdot \vec n)| (\vec M_\epsilon \cdot \vec M_\epsilon)(s,\tau)\dif s \right] \dif{\tau}
	\\ & \leq \frac{1}{2}\norm{\vec M_\epsilon(0)}_{\mathbf{H}}^2 \e^{-\sigma t} 
	+ \frac{1}{2\sigma} \int_0^t \norm{\vec f(\tau)}_{\mathbf{H}}^2 \e^{\sigma(\tau -t)} \; \dif{\tau}.
	\label{eps-apriori}
	\end{align}
\end{theorem}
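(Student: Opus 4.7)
The strategy is to obtain existence via a Faedo--Galerkin approximation in $\mathbf{X}_\epsilon$ (equivalently, by applying Lions' theorem for non-symmetric parabolic problems with time-dependent coefficients), to derive the a-priori estimate by testing with the solution itself, and to deduce uniqueness from linearity through the same energy inequality applied to a difference of two solutions. The key enabling feature is that with $\epsilon>0$ the term $\epsilon(\nabla \vec M,\nabla \vec N)_{\mathbf H}$ renders $a_\epsilon(t;\cdot,\cdot)$ elliptic on $\mathbf{X}_\epsilon$, so the classical parabolic framework applies.

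The first step is to verify the structural properties of $a_\epsilon$ uniformly in $t\in[0,T]$: boundedness on $\mathbf{X}_\epsilon\times\mathbf{X}_\epsilon$ (combining the $W^{1,\infty}$ bound on $\vec u$, essential boundedness of $\vec B$, the Sobolev embedding of $\mathbf{X}_\epsilon$ into $[L^4(\Omega)]^3$, and a trace inequality for the inflow term) and coercivity. For coercivity, testing with $\vec N=\vec M$ exploits three simplifications: the convective term reduces under $\mbox{div}\,\vec u=0$ to the pure boundary contribution $\tfrac12\int_\Gamma(\vec u\cdot\vec n)|\vec M|^2\,\dif s$; the cross-product term vanishes pointwise since $\vec B\times\vec M\perp\vec M$; and the diagonal relaxation obeys $(D\vec M,\vec M)_{\mathbf H}\geq\sigma\norm{\vec M}_{\mathbf H}^2$ with $\sigma=1/T_1$ the smallest eigenvalue of $D$. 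Combining the convective boundary integral with the inflow penalty through the identity $\tfrac12(\vec u\cdot\vec n)+(\vec u\cdot\vec n)^\ominus=\tfrac12|\vec u\cdot\vec n|$ yields
\begin{align*}
a_\epsilon(t;\vec M,\vec M) \;\geq\; \epsilon\norm{\nabla\vec M}_{\mathbf H}^2+\sigma\norm{\vec M}_{\mathbf H}^2 +\tfrac12\int_\Gamma|\vec u\cdot\vec n|\,|\vec M|^2\,\dif s,
\end{align*}
which is fully coercive (no G\aa rding shift is needed) and already produces the exact dissipative structure appearing on the left of \labelcref{eps-apriori}. A Galerkin sequence then admits uniform bounds in $L^\infty(0,T;\mathbf H)\cap L^2(0,T;\mathbf{X}_\epsilon)$, and standard compactness/density arguments deliver the limit $\vec M_\epsilon$.

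For the a-priori estimate itself, insert $\vec N=\vec M_\epsilon(t)$ into \Cref{prb:epslin}, write $(\partial_t\vec M_\epsilon,\vec M_\epsilon)_{\mathbf H}=\tfrac12\dod{}{t}\norm{\vec M_\epsilon}_{\mathbf H}^2$, invoke the coercivity identity above, and apply Young's inequality with weight $\sigma$ to $(\vec f,\vec M_\epsilon)_{\mathbf H}\leq\tfrac1{2\sigma}\norm{\vec f}_{\mathbf H}^2+\tfrac\sigma2\norm{\vec M_\epsilon}_{\mathbf H}^2$ (working with $\vec M_\Gamma=0$; inhomogeneous data are handled by a standard lifting absorbed into $\vec f$). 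The resulting differential inequality, multiplied by the integrating factor $\e^{\sigma t}$, rearranges so its left-hand side equals $\tfrac12\dod{}{t}\bigl(\e^{\sigma t}\norm{\vec M_\epsilon}_{\mathbf H}^2\bigr)$ plus the dissipative terms weighted by $\e^{\sigma t}$; integrating on $(0,t)$ and multiplying by $\e^{-\sigma t}$ reproduces \labelcref{eps-apriori} exactly.

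I expect the main obstacle to be the existence step rather than the estimate: the $\mathbf H$-regularity of $\vec B$ forces one to check that $(\vec B\times\vec M,\vec N)_{\mathbf H}$ is a well-defined, measurable, uniformly bounded bilinear form on $\mathbf{X}_\epsilon$ via a H\"older estimate of the form $L^2\cdot L^4\cdot L^4$, and that $t\mapsto a_\epsilon(t;\cdot,\cdot)$ is Carath\'eodory so that the Lions framework (or a Galerkin ODE system with measurable coefficients) is truly applicable. Once those measurability and boundedness checks are in place, the remainder is a routine combination of the coercivity identity, Young's inequality, and the Grönwall-type integrating factor, producing both the solution and the stated estimate.
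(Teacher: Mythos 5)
Your proposal is correct and follows essentially the same route as the paper: Lions' theorem for time-dependent evolution problems, with measurability, boundedness, and the same coercivity identity (divergence-free convection reducing to the boundary term, $\vec B\times\vec M\perp\vec M$, and $(D\vec M,\vec M)\ge\sigma\norm{\vec M}_{\mathbf H}^2$), followed by symmetric testing, Young's inequality with weight $\sigma$, and the Gronwall integrating factor. Your remark that the stated hypothesis $\vec B\in[L^\infty(0,T;\mathbf H)]^3$ forces an $L^2\cdot L^4\cdot L^4$ H\"older estimate is in fact slightly more careful than the paper, which silently uses $\norm{\vec B}_{L^\infty(0,T;[L^\infty(\Omega)]^3)}$ in its boundedness bound.
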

\begin{proof}
	In order to prove well-posedness, we apply the main existence theorem for evolution problems by J.L. Lions, 
	see Theorem 6.6 in \cite{ern2004theory}. To this end, we define the norm
	\begin{align}
	\label{eq:eps-norm}
	\norm{\vec M}_{\mathbf{X}_\epsilon} = \left[ \epsilon \norm{\nabla {\vec M}}^2_{L^2(\Omega)} + 
	\norm{\vec M}^2_{\mathbf{X}}  \right]^{\frac12}.
	\end{align}
	For the application of the theorem, the following conditions must be satisfied: 
	\begin{enumerate}[label=({\bfseries P\arabic*})]
		\item The time-dependent bilinear form $t \mapsto a_\epsilon (t;\vec M,\vec N)$ is measurable 
		for all $\vec M,\vec N \in \mathbf{X}_\epsilon$ since the fields $\vec u $ and $\vec B$ are 
		sufficiently smooth.
		\item Application of the Cauchy-Schwarz and generalized H\"older inequalities show that the bilinear 
		form $a_\epsilon(t;\cdot,\cdot)$ is bounded for 
		$t \in [0,T]$ and for all $\vec M, \vec N \in \mathbf{X}_\epsilon$:
		\begin{align}
		\abs{a_\epsilon (t;\vec M, \vec N)}\leq & \epsilon \norm{\nabla {\vec M}}_{\mathbf{H}} 
		\norm{\nabla {\vec N}}_{\mathbf{H}} + \abs{(\vec u \cdot \nabla\vec M,\vec N)_{\mathbf{H}} } + 
		\gamma \abs{(B \times \vec M, \vec N)_{\mathbf{H}} }  \nonumber \\ & 
		+\abs{(D\vec M, \vec N)_{\mathbf{H}} } + 
		\abs{\int_{\Gamma} (\vec u \cdot \vec n)^{\ominus} \vec M \cdot \vec N \; \dif{s}} 
		\nonumber\\ \nonumber
		\leq & \epsilon \norm{\nabla {\vec M}}_{\mathbf{H}} \norm{\nabla {\vec N}}_{\mathbf{H}} + 
		\norm{\vec u \cdot \nabla \vec M}_{\mathbf{H}} \norm{\vec N}_{\mathbf{H}} + 
		\gamma \norm{\vec B}_{L^\infty}\norm{\vec M}_{\mathbf{H}} \norm{\vec N}_{\mathbf{H}} \nonumber \\
		&+ \norm{D}_{L^\infty} \norm{\vec M}_{\mathbf{H}} \norm{\vec N}_{\mathbf{H}}+
		k_s \norm{\vec M}_{\mathbf{X}} \norm{\vec N}_{\mathbf{X}} 
		\nonumber   
		\end{align}  
		with $\norm{\vec B}_\infty := \norm{\vec B}_{L^\infty(0,T;[L^\infty (\Omega)]^3)}$, 
		$\norm{D}_{L^\infty} := \max \lbrace \frac{1}{T_1},\frac{1}{T_2} \rbrace = \frac{1}{T_2}$ as $T_1 \geq T_2$ 
		and where we used the trace 
		inequality (see \cite{Di2011mathematical}, Lemma \num{2.5})
		\begin{align}
		\abs{\int_{\Gamma} (\vec u \cdot \vec n)^{\ominus} \vec M \cdot \vec N \; \dif{s}} \leq 
		k_s \norm{\vec u \cdot \nabla \vec M}_{\mathbf{H}} \norm{\vec u \cdot \nabla \vec N}_{\mathbf{H}} 
		\le  k_s \norm{\vec M}_{\mathbf{X}} \norm{\vec N}_{\mathbf{X}} . \nonumber
		\end{align}
		Now the norm definitions (\ref{eq:graphnorm}) and (\ref{eq:eps-norm}) imply boundedness:
		{
		\begin{align}
		\abs{a_\epsilon(t;\vec M, \vec N)}\leq & (2+  
		 \gamma \norm{\vec B}_{L^\infty} + 
		\norm{D}_{L^\infty} + k_s) \norm{\vec M}_{\mathbf{X_\epsilon}} \norm{\vec N}_{\mathbf{X_\epsilon}}. \nonumber
		\end{align}
	}
		\item Finally, bilinear form $a_\epsilon$ fulfills a coercivity condition. Integration by parts yields
		\begin{align}\nonumber
		a_\epsilon(t;\vec N,\vec N) &= \epsilon \norm{\nabla \vec N}^2_{\mathbf{H}} + 
		\norm{D^{\nicefrac{1}{2}}\vec N}^2_{\mathbf{H}}  + 
		\frac{1}{2}\int_{\Gamma} (\vec u \cdot \vec n) \vec N \cdot \vec N \; \dif{s} + 
		\int_{\Gamma} (\vec u \cdot \vec n)^{\ominus} \vec N \cdot \vec N \; \dif{s} \\ \nonumber
		&= \epsilon \norm{\nabla \vec N}^2_{\mathbf{H}} + \norm{D^{\nicefrac{1}{2}}\vec N}_{\mathbf{H}}^2 +
		\frac{1}{2}\int_{\Gamma} \abs{(\vec u \cdot \vec n)} \vec N \cdot \vec N \; \dif{s} \\
		& \geq \epsilon \norm{\nabla \vec N}^2_{\mathbf{H}} + \sigma \norm{\vec N}_{\mathbf{H}}^2 + 
		\frac{1}{2}\int_{\Gamma} \abs{(\vec u \cdot \vec n)} \vec N \cdot \vec N \; \dif{s}
		\label{eq:coercivity}
		\end{align}
		with
		\begin{align}
		D^{\nicefrac{1}{2}}&:= \diag{\frac{1}{\sqrt{T_2}},\frac{1}{\sqrt{T_2}},\frac{1}{\sqrt{T_1}}}, \quad 
		\sigma:= \min(\frac{1}{T_1},\frac{1}{T_2}) = \frac{1}{T_1}\; (\text{as}\;T_1\geq T_2),
		\end{align}
		as $(\vec B \times \vec N, \vec N )= 0$ and $\mbox{div}~{\vec u}=0$ due to the incompressibility assumption. 
	\end{enumerate}
	Now we can apply the theorem by Lions giving existence and uniqueness of a generalized solution 
	$\vec M_\epsilon:[0,T]\mapsto \mathbf{X}$ of (\ref{prb:epslin})--(\ref{prb:epslininit}). 
	
	It remains to prove the a-priori estimate. 
	For all $t\in [0,T]$, we take $\vec N= \vec M_\epsilon$ in (\ref{prb:epslin})--(\ref{prb:epslininit}) 
	to obtain via Cauchy-Schwarz and Young inequalities 
	\begin{align} \nonumber
	\frac{1}{2}\dod{}{t}\norm{\vec M_\epsilon}^2_{\mathbf{H}} + a_\epsilon(t;\vec M_\epsilon,\vec M_\epsilon) 
	&= (\vec f,\vec M_\epsilon)_{\mathbf{H}}
	\leq \frac{1}{2\sigma}\norm{\vec f}_{\mathbf{H}}^2 + \frac{\sigma}{2}\norm{\vec M_\epsilon}_{\mathbf{H}}^2, 
	\end{align}
	then via ({\bf{P3}}) 
	\begin{align} \nonumber
	\frac{1}{2} \dod{}{t} \norm{\vec M_\epsilon}_{\mathbf{H}}^2 + \epsilon \norm{\nabla \vec M_\epsilon}^2_{\mathbf{H}} 
	+ \frac{\sigma}{2} \norm{\vec M_\epsilon}_{\mathbf{H}}^2
	+\frac{1}{2}\int_{\Gamma} \abs{(\vec u \cdot \vec n)} \vec M_\epsilon \cdot \vec M_\epsilon \; \dif{s} 
	\leq \frac{1}{2\sigma} \norm{\vec f}_{\mathbf{H}}^2.
	\end{align}
	Now, application of the Gronwall Lemma, see Lemma \num{6.9} in \cite{ern2004theory}, implies
	\begin{align} \nonumber
	& \frac{1}{2}\norm{\vec M_\epsilon}_{\mathbf{H}}^2 + 
	\int_0^t \e^{\sigma(\tau -t)} \left[ \epsilon \norm{\nabla \vec M_\epsilon(\tau)}^2_{\mathbf{H}} 
	+ \frac12\int_{\Gamma}|(\vec u \cdot \vec n)| (\vec M_\epsilon \cdot \vec M_\epsilon)(s,\tau) \dif s \right] \diff{\tau}
	\\ & \leq \frac{1}{2}\norm{\vec M_\epsilon(0)}_{\mathbf{H}}^2 \e^{-\sigma t} 
	+ \frac{1}{2\sigma} \int_0^t \norm{\vec f(\tau)}_{\mathbf{H}}^2 \e^{\sigma(\tau -t)} \; \dif{\tau}.
	\end{align}
	{
	and we obtain estimate (\ref{eps-apriori})}.
\end{proof}
{
	\begin{remark}
	For the given case of constant source $\vec f$, we obtain 
	\begin{align}
	\norm{\vec f(t)}_{\mathbf{H}}^2=\norm{\vec f}_{\mathbf{H}}^2 = \int_\Omega (\frac{M_0}{T_1})^2 \dif{\vec r}
	= (\frac{M_0}{T_1})^2 \abs{\Omega}, \qquad \abs{\Omega} = \int_\Omega \dif{\vec r}
	\end{align}
	hence
	\begin{align} \nonumber
	\int_0^t \norm{\vec f(\tau)}_{\mathbf{H}}^2 \e^{\sigma(\tau-t)} \dif{\tau} = 
	(\frac{M_0}{T_1})^2 \abs{\Omega} \frac{1-\e^{-\sigma t}}{\sigma}.
	\end{align}

%
\end{remark}
}
Finally, we can pass to the limit { $\epsilon \to +0 $}, i.e. to the Bloch model. 
\begin{theorem}\label{thm:energyestimate}
	The Bloch model (\ref{prb:abslin}) admits a unique solution $\vec M \in L^\infty(0,T;\mathbf{H}) 
	\cap L^2(0,T;\mathbf{X})$. The kinetic energy of the magnetic field is bounded by:
	\begin{align}  \nonumber 
	&  \frac{1}{2}\norm{\vec M(t)}_{\mathbf{H}}^2 + \frac12 \int_0^t \e^{\sigma(\tau -t)} \left[  
	\int_{\Gamma}|(\vec u \cdot \vec n)| (\vec M \cdot \vec M))(s,\tau) \dif s \right] \diff{\tau}
	\\ 
	& \leq \frac{1}{2}\norm{\vec M(0)}_{\mathbf{H}}^2 \e^{-\sigma t} 
	+ \frac{1}{2\sigma} \int_0^t \norm{\vec f(\tau)}_{\mathbf{H}}^2 \e^{\sigma(\tau -t)} \; \dif{\tau}.
	\label{eq:energyestimate}
	\end{align}
\end{theorem}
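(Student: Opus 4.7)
The plan is to realize the solution $\vec{M}$ of the limit problem (\ref{prb:abslin})–(\ref{prb:abslininit}) as the vanishing-viscosity limit of the regularized solutions $\vec{M}_\epsilon$ from Theorem~\ref{thm:genexistunique}, and to obtain (\ref{eq:energyestimate}) by weak lower semicontinuity. The starting point is that each of the three summands on the left-hand side of (\ref{eps-apriori}) is individually non-negative, so the inequality yields bounds on $\vec{M}_\epsilon$ in $L^\infty(0,T;\mathbf{H})$, on $\sqrt{\epsilon}\,\nabla\vec{M}_\epsilon$ in $L^2(0,T;\mathbf{H})$, and on the weighted boundary trace $|\vec{u}\cdot\vec{n}|^{1/2}\vec{M}_\epsilon$ in $L^2((0,T)\times\Gamma)$, all uniform in $\epsilon$.

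Next, I would extract a subsequence with $\vec{M}_\epsilon \rightharpoonup^* \vec{M}$ in $L^\infty(0,T;\mathbf{H})$ together with weak convergence of the boundary trace in the weighted $L^2$-space, and pass to the limit in (\ref{prb:epslin}) tested against $\vec{N}\in C^\infty((0,T)\times\overline{\Omega})$ (extended by density to $\vec{N}\in\mathbf{X}$). After integration by parts in time, the transport, cross-product, relaxation, source, and boundary terms are linear in $\vec{M}_\epsilon$ with fixed smooth coefficients and pass termwise by weak-$*$ convergence and the uniform trace bound; the regularization contribution $\epsilon(\nabla\vec{M}_\epsilon,\nabla\vec{N})_{\mathbf{H}}=-\epsilon(\vec{M}_\epsilon,\Delta\vec{N})_{\mathbf{H}}$ (plus a boundary correction) vanishes as $\epsilon\to 0$ because $\vec{M}_\epsilon$ stays bounded in $\mathbf{H}$. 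This identifies $\vec{M}$ as a distributional solution, and the identity $(\vec{u}\cdot\nabla)\vec{M}=\vec{f}-\partial_t\vec{M}-\gamma\vec{B}\times\vec{M}-D\vec{M}$ together with $\partial_t\vec{M}\in L^2(0,T;\mathbf{X}^*)$, obtained in the standard evolution-triple way, upgrades $\vec{M}$ to lie in $L^2(0,T;\mathbf{X})$. The estimate (\ref{eq:energyestimate}) then follows from (\ref{eps-apriori}) upon discarding the non-negative $\epsilon\|\nabla\vec{M}_\epsilon\|^2$ term and invoking weak lower semicontinuity of the $\mathbf{H}$-norm and of the $|\vec{u}\cdot\vec{n}|$-weighted boundary norm.

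For uniqueness I would exploit linearity: the difference $\vec{W}=\vec{M}_1-\vec{M}_2$ of two solutions satisfies (\ref{prb:abslin}) with zero source, zero inflow data, and zero initial data. Testing with $\vec{W}$ and using the $\epsilon=0$ form of the coercivity computation from property (\textbf{P3}),
$$a(t;\vec{W},\vec{W})\geq \sigma \|\vec{W}\|_{\mathbf{H}}^2+\tfrac12\int_\Gamma |\vec{u}\cdot\vec{n}|\,\vec{W}\cdot\vec{W}\,\dif{s},$$
followed by Gronwall, yields $\vec{W}\equiv 0$. The main obstacle I anticipate is the rigorous handling of boundary traces in the limit: elements of $\mathbf{X}$ do not a priori admit $L^2$-traces on all of $\Gamma$, so the boundary integrals must be interpreted in the Friedrichs-system trace sense (cf.\ \cite{ern2004theory}, Sec.\ 7), and the compatibility of the weak trace convergence extracted from (\ref{eps-apriori}) with this interpretation relies precisely on the separation hypothesis $\mathrm{dist}(\Gamma_-,\Gamma_+)>0$ stated at the outset of Section~\ref{sec:Well-posedness}.
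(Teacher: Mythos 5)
Your proposal follows essentially the same route as the paper: the paper's own proof is a two-line sketch that passes to the vanishing-viscosity limit $\epsilon \to +0$ in Theorem~\ref{thm:genexistunique}, citing Lions' singular-perturbation theory for the boundary layer on $\Gamma_0 \cup \Gamma_+$ and noting, exactly as you do, that the separation of inflow and outflow is what makes this work; your write-up simply supplies the compactness, weak lower semicontinuity, trace, and uniqueness details that the paper leaves to the reference. One small imprecision worth fixing: knowing only $\partial_t \vec M \in L^2(0,T;\mathbf{X}^*)$ does not let you read off $(\vec u \cdot \nabla)\vec M \in L^2(0,T;\mathbf{H})$ from the equation, so the claimed $L^2(0,T;\mathbf{X})$-regularity requires a uniform (in $\epsilon$) bound on the graph norm of $\vec M_\epsilon$, or equivalently on $\partial_t \vec M_\epsilon$ in $\mathbf{H}$, rather than the duality argument you sketch.
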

\begin{proof}
	A careful inspection of the proof of \Cref{thm:genexistunique} shows that the existence and 
	uniqueness result together with the a-priori estimate remain valid for $\epsilon \to +0$. 
	As already mentioned, the variational formulation of the regularized Bloch problem incorporates 
	do-nothing boundary conditions $\epsilon \nabla \vec M_\epsilon \cdot \mathbf{n}= \vec 0$ on 
	$\Gamma_0 \cup \Gamma_+$. In the proof of the limit $\epsilon \to +0$ at $\Gamma_0 \cup \Gamma_+$, 
	one can proceed as in Chapter V.1 of \cite{lions2006perturbations}. Note that here it is used that 
	inflow and outflow are separated.
\end{proof}
\begin{remark}
	The result of \Cref{thm:genexistunique} and a-priori estimate ~\ref{eq:energyestimate} remain 
	valid for the special case $\vec u = \vec 0$, i.e. Bloch equations for spatially stationary objects.
\end{remark}

\section{Semi-discrete Equation} \label{sec:semidisc}
Here  we consider the spatial discretization of problem (\ref{prb:abslin}) by dG-FEM 
\cite{Di2011mathematical}. 
\subsection{Discontinuous Galerkin Formulation}
Consider a non-overlapping decomposition $\mathcal{T}_h := \{\Omega_i\}_{i=1}^I$ 
into convex simplicial subdomains $\Omega_i, i=1,2,\cdots , I$ as depicted in the left part of 
\Cref{fig:grid2d}.
We define the discontinuous finite element space
\begin{align}
[\mathbb{P}_k(\mathcal{T}_h)]^d := \{\vec N_h\in \mathbf{H};\: 
\eval[0]{\vec N_h}_{\Omega_i} \in [\mathbb{P}_k(\Omega_i)]^d \quad \forall \; \Omega_i,\; i=1,2, \cdots, I\}
\end{align}
where $\mathbb{P}_k$ denotes the set of polynomials of degree $k\in \mathbb{N}$. 
Moreover, let $\mathbf{X}_h = [\mathbb{P}_k(\mathcal{T}_h)]^d \bigcap \mathbf{X}$.
For adjacent subdomains $\Omega_i, \Omega_j$ with interface 
$E=\Gamma_{ij} = \overline{\Omega}_i \bigcap \overline{\Omega}_j$ and 
unit normal vector $\vec n_{ij}$ (directed from $\Omega_i$ to $\Omega_j$), 
as depicted in the right part of \Cref{fig:grid2d}, we define the average and jump of 
$\vec N_h \in \mathbf{X}$ across $\Gamma_{ij}$ by
\begin{subequations}
	\begin{align}\label{eq:dgavg}
	\left\langle \vec N_h \right \rangle_{\Gamma_{ij}}(\vec r)&:=\frac{1}{2}(\eval[0]{\vec N_h}_{\Omega_i}(\vec r) 
	+ \eval[0]{\vec N_h}_{\Omega_j}(\vec r)),\\
	[\vec N_h]_{\Gamma_{ij}}(\vec r)&:= \eval[0]{\vec N_h}_{\Omega_i}(\vec r) - 
	\eval[0]{\vec N_h}_{\Omega_j}(\vec r).\label{eq:dgjump}
	\end{align}
\end{subequations}

\begin{figure}[t]
	\centering
	\includegraphics[width=0.90\columnwidth]{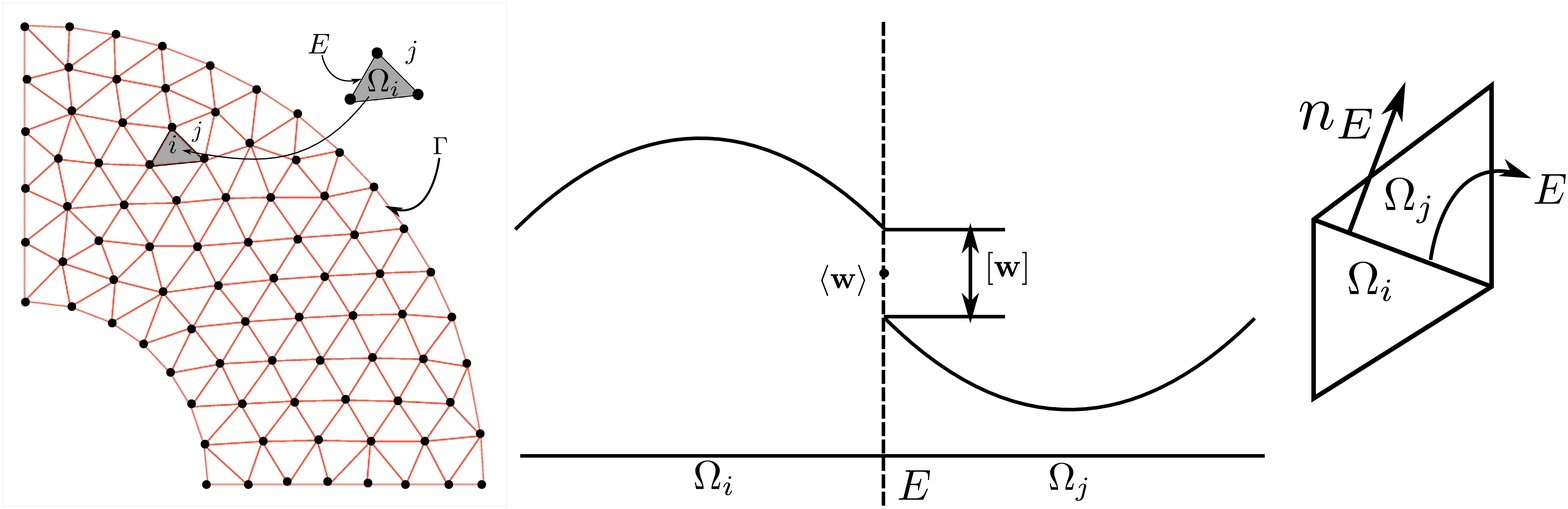}
	\caption{Left: 2{D}-simplicial mesh. Right: 1D-example of average and jump operators. 
		Interface $E$ between $i$-th and $j$-th cell ($j>i$) is depicted with the used notation. 
		Orientation of outward normal is from lower to higher numbered cell.}
	\label{fig:grid2d}
\end{figure}

Let $\mathcal{F}_h^i$ be the set of all the interior interfaces $ E \subseteq \Omega$ and define 
the upwind form 
\begin{align}
S_h(t;\vec M,\vec N):=& 
\sum_{E\in \mathcal{F}_h^i}\int_{E} \bigg(-(\vec u \cdot \vec n_E) [ \vec M] \cdot \left\langle \vec N \right \rangle 
+\frac{1}{2} \abs{\vec u \cdot \vec n_E} [\vec M]\cdot [\vec N]\bigg) \; \dif{s} .
\end{align}
Moreover, gradient jumps over interior faces are penalized via
\begin{align} \label{eq:penalize}
p_\epsilon(\vec M,\vec N):= {\tilde \epsilon} \sum_{E \in \mathcal{F}_h^i} h_E^2  \int_E |\mathbf{u} \cdot \mathbf{n}_E| 
[\nabla \vec M]_E : [\nabla \vec N]_E\; \dif{s}, \quad {\tilde \epsilon} \ge 0. 
\end{align}
Setting
\begin{align} \label{eq:upwind}
a_\epsilon ^{\textnormal{upw}}(t;\vec M, \vec N):= p_\epsilon(\vec M,\vec N) 
+ a(t;\vec M, \vec N) + S_h(t;\vec M, \vec N),
\end{align}
the upwind dG-FEM reads: find $ \vec M_h:(0,T] \mapsto \mathbf{X}_h$ such that $\forall\; \vec N_h \in \mathbf{X}_h$
\begin{align}\label{eq:discretebilinear}
(\partial_t\vec M_h, \vec N_h)_{\mathbf{H}} +   a_\epsilon^{\textnormal{upw}}(t;\vec M_h, \vec N_h) = l(\vec N_h).
\end{align}

\subsection{Well-posedness of the Semi-discrete Problem}
Let us define the  norm $\opnorm{\vec N_h}_{\mathbf{U}}$ via
\begin{align} \nonumber
\opnorm{\vec N_h}_{\mathbf{U}}^2&:= 
{\tilde \epsilon} \sum_{E \in \mathcal{F}_h^i} h_E^2 \int_E |\mathbf{u} \cdot \mathbf{n}_E| 
\norm{[\nabla \vec N_h]_E }^2_{L^2(E)} \\
& + \frac{1}{2}\int_\Gamma \abs{\vec u \cdot \vec n}\abs{\vec N_h}^2 \dif{s}
+ \frac{1}{2}\sum_{E \in \mathcal{F}_h^i} \int_E \abs{\vec u \cdot \vec n_E} [\vec N_h]_E^2 \dif{s}.
\label[expression]{eq:discretetestfun}
\end{align}

\begin{theorem}
	The semi-discrete problem ~\ref{eq:discretebilinear} is well-posed and admits the a-priori estimate
	\begin{align}   \label{eq:semiwell} 
	\frac{1}{2}\norm{\vec M_h(t)}_{\mathbf{H}}^2  + 
	\int_0^t e^{\sigma (\tau -t)} \opnorm{\vec M_h (\tau)}_{\mathbf{U}}^2 \dif{\tau}  
	\leq  \frac{1}{2}\norm{\vec M_h(0)}_{\mathbf{H}}^2 e^{- \sigma t}
	+ \frac{1}{2\sigma} \int_0^t e^{\sigma (\tau -t)} \norm{\vec f(\tau)}_{\mathbf{H}}^2 \dif{\tau} .
	\end{align}
\end{theorem}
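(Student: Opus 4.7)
The structure of the proof will mirror the continuous result \Cref{thm:genexistunique}, replacing the elliptic regularization argument by the fact that $\mathbf{X}_h$ is finite-dimensional. Existence and uniqueness on $(0,T]$ reduce to a linear system of ODEs: expanding $\vec M_h$ in a basis of $\mathbf{X}_h$, the formulation \Cref{eq:discretebilinear} becomes $\mathbb{M}\dot{\vec\alpha}(t) = -\mathbb{A}_\epsilon(t)\vec\alpha(t) + \vec b(t)$ with mass matrix $\mathbb{M}$ positive definite and with $\mathbb{A}_\epsilon, \vec b$ measurable and essentially bounded in $t$ (by the regularity of $\vec u$ and $\vec B$). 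Classical Carath\'eodory theory then yields a unique absolutely continuous solution.

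\textbf{A-priori estimate.} The bulk of the work is to establish coercivity of $a_\epsilon^{\textnormal{upw}}(t;\cdot,\cdot)$ with respect to the norm $\opnorm{\cdot}_{\mathbf{U}}$ plus $\sigma\|\cdot\|_{\mathbf H}^2$. I would test \Cref{eq:discretebilinear} with $\vec N_h = \vec M_h$ and compute the three constituents of $a_\epsilon^{\textnormal{upw}}(t;\vec M_h,\vec M_h)$ separately. First, $p_\epsilon(\vec M_h,\vec M_h)$ is by definition the gradient-jump contribution in $\opnorm{\vec M_h}_{\mathbf U}^2$ and is non-negative. Second, using $(\vec B\times \vec M_h)\cdot \vec M_h = 0$ and $(D\vec M_h,\vec M_h)_{\mathbf H}\ge \sigma\|\vec M_h\|_{\mathbf H}^2$, the only delicate piece of $a(t;\vec M_h,\vec M_h)$ is the convection term $((\vec u\cdot\nabla)\vec M_h,\vec M_h)_{\mathbf H}$, which must be computed element-wise. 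Integration by parts on each $\Omega_i$, together with $\mbox{div}\,\vec u = 0$, produces
\begin{align*}
\sum_i \int_{\Omega_i} (\vec u\cdot\nabla)\vec M_h\cdot \vec M_h \,\dif\vec r
 = \frac{1}{2}\int_\Gamma (\vec u\cdot\vec n)|\vec M_h|^2\,\dif s
 + \frac{1}{2}\sum_{E\in\mathcal{F}_h^i}\int_E (\vec u\cdot\vec n_E)\bigl(|\vec M_h^i|^2 - |\vec M_h^j|^2\bigr)\dif s,
\end{align*}
where $\vec M_h^{i,j}$ are the traces from adjacent cells. The identity $|\vec M_h^i|^2-|\vec M_h^j|^2 = 2\,[\vec M_h]\cdot\langle\vec M_h\rangle$ shows that this interior-face sum is \emph{exactly} cancelled by the antisymmetric part $-(\vec u\cdot\vec n_E)[\vec M_h]\cdot\langle\vec M_h\rangle$ of $S_h$. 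What survives is the symmetric part $\tfrac{1}{2}|\vec u\cdot\vec n_E|[\vec M_h]^2$ of $S_h$, plus the boundary contribution. Adding the weak boundary term $\int_\Gamma(\vec u\cdot\vec n)^\ominus |\vec M_h|^2\,\dif s$ from $a$ to $\tfrac12\int_\Gamma (\vec u\cdot\vec n)|\vec M_h|^2\,\dif s$ reduces, via $(\vec u\cdot\vec n)^\ominus = \tfrac12(|\vec u\cdot\vec n|-\vec u\cdot\vec n)$, to $\tfrac12\int_\Gamma|\vec u\cdot\vec n||\vec M_h|^2\,\dif s$. Collecting everything yields
\begin{align*}
a_\epsilon^{\textnormal{upw}}(t;\vec M_h,\vec M_h) \ \ge\ \sigma\|\vec M_h\|_{\mathbf H}^2 + \opnorm{\vec M_h}_{\mathbf U}^2 .
\end{align*}

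\textbf{Closing the estimate.} With coercivity in hand, I would bound the right-hand side by Cauchy--Schwarz and Young: $(\vec f,\vec M_h)_{\mathbf H}\le \tfrac{1}{2\sigma}\|\vec f\|_{\mathbf H}^2 + \tfrac{\sigma}{2}\|\vec M_h\|_{\mathbf H}^2$, giving
\begin{align*}
\frac{1}{2}\dod{}{t}\|\vec M_h\|_{\mathbf H}^2 + \frac{\sigma}{2}\|\vec M_h\|_{\mathbf H}^2 + \opnorm{\vec M_h}_{\mathbf U}^2 \ \le\ \frac{1}{2\sigma}\|\vec f\|_{\mathbf H}^2 .
\end{align*}
An application of the Gronwall lemma (Lemma 6.9 in \cite{ern2004theory}), exactly as at the end of the proof of \Cref{thm:genexistunique}, produces \Cref{eq:semiwell}.

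\textbf{Main obstacle.} The only nontrivial bookkeeping is the combination of element-wise integration by parts with the upwind stabilization $S_h$. One has to track the signs on interior faces carefully, verify the cancellation of the antisymmetric cross-term $(\vec u\cdot\vec n_E)[\vec M_h]\cdot\langle\vec M_h\rangle$ against the sum of per-element boundary contributions, and then show that the remaining pieces exactly reproduce the three summands of $\opnorm{\cdot}_{\mathbf U}^2$. Everything else---non-negativity of $p_\epsilon$, vanishing of the cross-product term, positivity of $D$, passage to the energy inequality, and Gronwall---is standard and parallels the continuous case.
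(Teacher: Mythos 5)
Your proposal is correct and follows essentially the same route as the paper: symmetric testing $\vec N_h=\vec M_h$, the coercivity bound $a_\epsilon^{\textnormal{upw}}(t;\vec M_h,\vec M_h)\ge \sigma\norm{\vec M_h}_{\mathbf H}^2+\opnorm{\vec M_h}_{\mathbf U}^2$, Young's inequality, and Gronwall. The only difference is one of detail: you spell out the element-wise integration by parts and the cancellation against the antisymmetric part of $S_h$ (and justify existence via finite-dimensional ODE theory), whereas the paper simply asserts the coercivity estimate and refers back to the continuous well-posedness proof.
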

\begin{proof}
	The existence and uniqueness proof follows the lines of the proof of \Cref{thm:genexistunique}.
	Similiar to the approach in \Cref{sec:Well-posedness}, symmetric testing $\vec N_h = \vec M_h$ 
	provides 
	\begin{align} \label{eq:coercive}
	a_\epsilon^{\textnormal{upw}}((t;\vec M_h, \vec M_h) & \ge \sigma \norm{\vec M_h}_{\mathbf{H}}^2+ 
	\opnorm{\vec M_h}^2_{\mathbf{U}}.
	\end{align}
	Then, Young inequality shows 
	\begin{align} \nonumber
	\frac{1}{2}\dod{}{t}\norm{\vec M_h(t)}_{\mathbf{H}}^2 + \frac{\sigma}{2} \norm{\vec M_h}_{\mathbf{H}}^2  
	+ \opnorm{\vec M_h}_{\mathbf{U}}^2  \leq \frac{1}{2\sigma} \norm{f}_{\mathbf{H}}^2 . 
	\end{align}
	Now, similarly to \Cref{thm:energyestimate}, application of the Gronwall lemma yields 
	the a-priori estimate (\ref{eq:semiwell}). 
	This shows the well-posedness of the semi-discretized Bloch problem.
\end{proof}

\subsection{Semi-discrete Error Estimate}
For the error of the spatial discretization we obtain the following result.
\begin{theorem}
	The error of spatial discretization is given by:
	\begin{align} \nonumber 
	&\frac{1}{2}\norm{(\vec M-\vec M_h)(t))}_{\mathbf{H}}^2  
	+ \frac12 \int_0^t \e^{\sigma (\tau -t)} \opnorm{(\vec M-\vec M_h)(\tau)}_{\mathbf{U}}^2 \dif{\tau} 
	\leq   \frac{1}{2}\norm{(\vec M-\vec M_h)(0)}_{\mathbf{H}}^2 \e^{-\sigma t} \\ 
	\label{eq:errorestimate}
	& + 
	\int_0^t \e^{\sigma (\tau -t)} \Big( 
	{\opnorm{(\vec M -\vec \pi_h \vec M)(\tau)}^2_{\mathbf{U},\flat}} + 
	\delta \norm{(\vec M - \vec \pi_h \vec M)(\tau)}_{\mathbf{H}} + p_\epsilon(\vec M,\vec M)(\tau) \Big) \dif{\tau}
	\end{align}
	with the $L^2$-orthogonal projection $\vec \pi_h \vec M$ of $\vec M$ onto $\mathbf{X}_h$ and 
	$\sigma = \frac{1}{T_1}$. The norm {$\opnorm{ \cdot }_{U,\flat}$} and constant $\delta$ will be 
	defined within the proof.
	For a sufficiently smooth solution $\vec M \in L^\infty (0,T;[W^{k+1,2}(\Omega)]^3)$, {the right-hand side
	term in (\ref{eq:errorestimate})} is of order $\bigO{h^{2k+1}}$ with 
	$h=\max_{i} \textnormal{diam}(\Omega_i)$.
\end{theorem}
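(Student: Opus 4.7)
The plan is to use the classical $L^2$-projection based energy argument for dG discretizations of linear first-order hyperbolic-reactive problems. I would introduce the $L^2$-orthogonal projection $\vec\pi_h:\mathbf{H}\to\mathbf{X}_h$ and split the error as $\vec M-\vec M_h=\vec\eta-\vec\xi$ with $\vec\eta:=\vec M-\vec\pi_h\vec M$ and $\vec\xi:=\vec M_h-\vec\pi_h\vec M\in\mathbf{X}_h$. For sufficiently smooth $\vec M$, both face-jumps $[\vec M]_E$ and $[\nabla\vec M]_E$ vanish on every interior face, so the exact solution satisfies the discrete scheme tested against $\vec N_h\in\mathbf{X}_h$; subtracting (\ref{eq:discretebilinear}) gives the Galerkin identity
\begin{align*}
(\partial_t(\vec M-\vec M_h),\vec N_h)_{\mathbf{H}}+a_\epsilon^{\textnormal{upw}}(t;\vec M-\vec M_h,\vec N_h)=0.
\end{align*}

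Next I would test this identity with $\vec N_h=\vec\xi$, insert the splitting on the first slot and use $L^2$-orthogonality of $\vec\pi_h$ to eliminate $(\partial_t\vec\eta,\vec\xi)_{\mathbf{H}}$, arriving at
\begin{align*}
\frac12\dod{}{t}\norm{\vec\xi}_{\mathbf{H}}^2+a_\epsilon^{\textnormal{upw}}(t;\vec\xi,\vec\xi)=a_\epsilon^{\textnormal{upw}}(t;\vec\eta,\vec\xi).
\end{align*}
The coercivity estimate (\ref{eq:coercive}) bounds the left-hand side from below by $\sigma\norm{\vec\xi}_{\mathbf{H}}^2+\opnorm{\vec\xi}_{\mathbf{U}}^2$. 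On the right-hand side, Cauchy-Schwarz and Young's inequality in each of the five contributions defining $a_\epsilon^{\textnormal{upw}}$ (bulk convection, reaction, boundary upwind term, skeleton upwind term $S_h$, penalty $p_\epsilon$) produce terms of the form $C\opnorm{\vec\xi}_{\mathbf{U}}^2+C\norm{\vec\xi}_{\mathbf{H}}^2$, which can be absorbed into the LHS, plus the remaining $\vec\eta$-factors, which collectively define the reduced norm $\opnorm{\vec\eta}_{\mathbf{U},\flat}^2$: it comprises the upwind boundary/face contributions with $h_E^{-1}$-weighted $L^2(E)$ norms of $\vec\eta$ (produced after an element-wise integration by parts in the convective term), the dual-norm piece $\norm{\vec u\cdot\nabla\vec\eta}_{\mathbf{H}}^2$, and the penalty $p_\epsilon(\vec\eta,\vec\eta)$. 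The constant $\delta$ then collects $\gamma\norm{\vec B}_{L^\infty}$, $\norm{D}_{L^\infty}$ and the Young constants that multiply $\norm{\vec\eta}_{\mathbf{H}}$. Applying Gronwall's lemma with rate $\sigma$ as in \Cref{thm:genexistunique}, adding the projection contribution back via the triangle inequality, and retaining $p_\epsilon(\vec M,\vec M)$ as the consistency residual (which is zero for smooth $\vec M$ but kept for robustness with respect to regularity) yields (\ref{eq:errorestimate}). The $\bigO{h^{2k+1}}$ rate then follows from standard approximation properties of the $L^2$-projection, $\norm{\vec\eta}_{\mathbf{H}}=\bigO{h^{k+1}}$, combined with the trace estimates $\int_E|\vec u\cdot\vec n_E|[\vec\eta]^2\dif{s}=\bigO{h^{2k+1}}$ and $h_E^2\int_E|\vec u\cdot\vec n_E|[\nabla\vec\eta]^2\dif{s}=\bigO{h^{2k+1}}$, the half-power loss being the familiar hyperbolic trace penalty.

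The main obstacle will be specifying $\opnorm{\cdot}_{\mathbf{U},\flat}$ so that it is simultaneously strong enough to dominate the convective consistency term $((\vec u\cdot\nabla)\vec\eta,\vec\xi)_{\mathbf{H}}+S_h(t;\vec\eta,\vec\xi)$ after the element-wise integration by parts (which produces face contributions in $\vec\eta$ that must pair with the upwind jump terms in $\opnorm{\vec\xi}_{\mathbf{U}}$) and weak enough that interpolation estimates still deliver the sharp $\bigO{h^{2k+1}}$ rate. The crucial step is exploiting $L^2$-orthogonality of $\vec\pi_h$ to shift the bulk convective $\vec\eta$-term onto faces, thereby gaining the extra half power of $h$ over what a naive Cauchy-Schwarz bound on $\norm{\vec u\cdot\nabla\vec\eta}_{\mathbf{H}}$ would yield.
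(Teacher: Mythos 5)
Your proposal is correct and follows essentially the same route as the paper: the $L^2$-projection splitting $\vec M-\vec M_h=(\vec M-\vec\pi_h\vec M)+(\vec\pi_h\vec M-\vec M_h)$, symmetric testing with the discrete part, coercivity of $a_\epsilon^{\textnormal{upw}}$, a boundedness estimate of the consistency terms in a weaker $\flat$-norm, Young's inequality, and Gronwall. The only difference is that where the paper invokes a vector-valued generalization of Lemma 2.30 in \cite{Di2011mathematical} for the bound \eqref{eq:upperbound}, you sketch its proof directly (element-wise integration by parts plus $L^2$-orthogonality to shift the bulk convective term onto faces), and you correctly keep $p_\epsilon(\vec M,\vec M)$ as the penalty consistency residual.
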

\begin{proof}
	The error equation for the error $\vec M - \vec M_h$ is given by
	\begin{align} \nonumber 
	(\partial_t (\vec M - \vec M_h),\vec N_h)_{\mathbf{H}} + a^{upw}_\epsilon (t;\vec M - \vec M_h,\vec N_h) = 
	p_\epsilon (\vec M, \vec N_h) 
	\quad \forall \; \vec N_h \in \mathbf{X_h},~ t \in (0,T]~ \mbox{a.e.}.  
	\end{align}  
	Let $\pi_h \vec N$ be the $L^2$-orthogonal projection of $\vec N$ onto $\mathbf{X}_h$, i.e., 
	$ (\vec N-\pi_h \vec N, \vec w) _{\mathbf{H}} = 0 \quad \forall \; \vec w \in \mathbf{X}_h.$
	Then we split the error as 
	$$\vec M - \vec M_h = (\vec M - \vec \pi_h \vec M) + (\vec \pi_h \vec M - \vec M_h) 
	\equiv \vec I_h + \vec E_h$$
	and reformulate the error equation 
	with $\vec N_h=\vec E_h$ as
	\begin{align} \label{eq:erroreqn2}
	(\partial_t \vec E_h,\vec E_h)_{\mathbf{H}} + a^{upw}_\epsilon (t;\vec E_h,\vec E_h) = 
	-  a^{upw}_\epsilon (t;\vec I_h,\vec E_h) + p_\epsilon (\vec M, \vec E_h), \quad t \in (0,T]~ \mbox{a.e.} 
	\end{align}  
	where we used that $(\partial_t \vec I_h,\vec E_h)_{\mathbf{H}}=0$ due to $L^2$-orthogonality.
	The left-hand side can be bounded from below as
	\begin{align} \label{eq:lowerbound}
	(\partial_t \vec E_h,\vec E_h)_{\mathbf{H}} + a^{upw}_\epsilon (t;\vec E_h,\vec E_h)
	\ge \frac12 \frac{d}{dt} \norm{\vec E_h}^2_{\mathbf{H}} + \sigma \norm{\vec E_h}^2_{\mathbf{H}} 
	+ \opnorm{\vec E_h}_{\mathbf{U}}^2.
	\end{align}
	For an estimate of the right-hand side we use 
	$
	p_\epsilon(\vec M,\vec E_h) \le p_\epsilon(\vec M,\vec M) \opnorm{\vec E_h}_{\mathbf{U}}
	$ 
	and
	\begin{align} \label{eq:upperbound}
	-  a^{upw}_\epsilon (t;\vec I_h,\vec E_h) 
	\le \Big(  {\opnorm{\vec I_h}_{\mathbf{U},\flat}} + 
	\delta \norm{\vec I_h}^2_{\mathbf{H}} 
	\Big) \opnorm{\vec E_h}_{\mathbf{U}}
	\end{align}
	with 
	\begin{align} \label{eq:opnorm*}
	{\opnorm{\vec I_h}^2_{\mathbf{U},\flat}}:= \zeta \opnorm{\vec I_h}^2_{\mathbf{U}} + 
	\sum_{T \in \mathcal{T}_h} \norm{\vec u}_{L^\infty (\partial T)} \norm{\vec I_h}^2_{L^2(\partial T)}
	\end{align} 
	and $\delta := \gamma \norm{\vec B}_{L^\infty} + \norm{D}_{L^\infty}$ resp. 
	$\zeta := \max \lbrace 1; \norm{\vec u}_{L^\infty (0,T;W^{1,\infty} (\Omega))} \rbrace .$
	Estimate (\ref{eq:upperbound}) relies on a generalization of Lemma 2.30 in \cite{Di2011mathematical} 
	for scalar advection-reaction problems to the vector-valued case. The estimate provides a 
	careful bound of the uwpind-discretized convective term and heavily exploits the boundedness of
	$L^2$-orthogonality of subscales.
	
	Combining (\ref{eq:lowerbound}), (\ref{eq:upperbound}) and using Young inequality, we obtain
	\begin{align} \label{eq:estimate}
	\frac12 \frac{d}{dt} \norm{\vec E_h}^2_{\mathbf{H}} + \frac12 \opnorm{\vec E_h}_{\mathbf{U}}^2 
	\le \opnorm{\vec I_h}^2_{\mathbf{U},\flat} + \delta \norm{\vec I_h}^2_{\mathbf{H}} + p_\epsilon(\vec M, \vec M).
	\end{align}
	Integration of (\ref{eq:estimate}) and the triangle inequality imply the quasi-optimal 
	error estimate (\ref{eq:errorestimate}).
	For a sufficiently smooth solution $\vec M \in L^\infty (0,T;[W^{k+1,2}(\Omega)]^3)$, the penalty term 
	$p_\epsilon(\vec M,\vec M)$ vanishes. Finally, interpolation results imply the error order 
	$\bigO{h^{k+\nicefrac{1}{2}}}$. 
\end{proof}

\begin{remark}
	The case $k=0$ covers the finite volume method (FVM).
\end{remark}

\section{Temporal Discretization} \label{sec:temporal}
Starting point is the spatially discretized problem: find $\vec M_h:(0,T]\mapsto \mathbf{X}_h
$ such that for all $ \mathbf{N}_h \in \mathbf{X}_h$
\begin{align}\label{eq:abstractdiscreteprob}
(\partial_t \mathbf{M}_h(t),\mathbf{N}_h)_{\mathbf{H}}+ a_\epsilon^{\textnormal{upw}}(t;\vec M_h(t),\vec N_h)=l(\vec N_h), 
  \quad \vec M_h(0) = \vec M_{h0}.
\end{align}
A major problem stems from the multiscale character of \labelcref{eq:abstractdiscreteprob} as the scale of 
magnetization is much faster than that of advection. Another difficulty is the restricted smoothness of the 
data in time, in particular of field $\vec G=\begin{pmatrix} G_x, G_y, G_z \end{pmatrix}^T$, see 
\Cref{fig:RadialFLASH} (for the FLASH sequence \cite{Frahm1985verfahren,haase1986flash}). 

According to the required high resolution in time, an explicit time stepping is chosen.
We considered two variants
\begin{enumerate*}[label=(\roman*)]
	\item a fully coupled approach and
	\item an operator splitting approach.
\end{enumerate*}
The efficiency of the numerical simulation can be strongly improved using GPU computing. This will
be exemplarily shown in \Cref{sec:experiments}.

\subsection{Fully Coupled Approach}\label{subsec:fullycoupled}
Following Sec. (\num{3.1}) in \cite{Di2011mathematical}, we apply a low-order explicit Runge-Kutta scheme to 
\labelcref{eq:abstractdiscreteprob}. Define a discrete operator
$A_\epsilon^{\textnormal{upw}}:\mathbf{X} + \mathbf{X}_h \mapsto \mathbf{X}_h$ via 
$ (A_\epsilon^{\textnormal{upw}}(t)\vec v, \vec w)_{\mathbf{H}}:=a_\epsilon^{\textnormal{upw}}(t;\vec v,\vec w)$. 
Similarly, let $L$ be a functional on $\mathbf{X}_h$ with $L=l(\vec w)$. Note that $L$ is constant in time in 
this application.

Let $0=t^0< t^1<t^2<\cdots<t^N =T$ be the set of discrete times with time steps $\tau_n:= t^{n+1}-t^n, n=0,1,\cdots, N-1$. 
Moreover, we denote $\mathbf{M}_h^n=\mathbf{M}_h(t^n)$ etc.

A two-stage RK-scheme is a good compromise between temporal accuracy and the restricted data smoothness in time. 
Following Subsec. 3.1.3 in \cite{Di2011mathematical} we select scheme 
\begin{subequations}
	\begin{align}\label{eq:twostagerk}
	\vec{M}_h^{n,1} &=\vec M_h^n - \tau_n A_\epsilon^{\textnormal{upw}}
	\vec M_h^n+\tau_n L \\
	\vec{M}_h^{n+1} &= \frac{1}{2}(\vec M_h^n+\vec M_h^{n,1}) -\frac{1}{2}\tau_n A_\epsilon^{\textnormal{upw}}
	\vec M_h^{n,1}+\frac{1}{2}\tau_n L .
	\end{align}
\end{subequations}
Precise statements of the stability and convergence of \labelcref{eq:twostagerk} can be found in case of smooth 
data in Subsec. 3.1.6 of \cite{Di2011mathematical}. In particular, a time step restriction on $\tau_n$ comes from
a CFL condition for the advective term. As in this application the time step restriction on $\tau_n$ comes from 
magnetization, the mentioned CFL condition is always valid in our calculations.

We will not repeat the details, e.g. of the stability and convergence RK2-analysis in \cite{Di2011mathematical}. 
It provides in case of smooth data in time, an error of order $\bigO{\tau_n^2+h^{k+\nicefrac{1}{2}}}$ with polynomial 
degree $k$ of spatial discretization (see Theorem 3.10). Such error estimate in time is not valid in this 
application, since the data $\vec{B}(t,\vec r)$ are only in $[C^{0,1}[0,T]]^3$ (for the FLASH-sequence
studied in \Cref{sec:experiments}) or even only in $[L^\infty(0,T)]^3$ for the example in Subsec. 5.2.
Alternatively, a standard embedded RK-scheme of type RK3(2) or even of higher order like RK5(4) is chosen for 
appropriate time step selection, see \Cref{sec:experiments}.
For details of the embedded methods, we refer, e.g., to Sec. 5 of \cite{deuflhard2012scientific}.

\subsection{Operator Splitting}\label{subsec:splitting}
The above mentioned multiscale character of problem \labelcref{eq:abstractdiscreteprob} suggests an operator 
splitting approach as suggested e.g.in Sec. IV.1.5. of \cite{Hundsdorfer2003} for large advection-reaction 
problems, e.g. in air-pollution simulations. Writing the semi-discrete problem \labelcref{eq:abstractdiscreteprob} 
with
\begin{align}
 \vec F_{\textnormal{adv}}(t,\vec M):= -\nabla\cdot(\vec u \vec M),\quad 
 \vec F_{\textnormal{mag}}:= \gamma \vec B \times \vec M + D\vec M -\vec f,
\end{align}
formally as ODE-system
\begin{align}
\dod{\vec M_h(t)}{t}=\vec F_{\textnormal{adv}}(t,\vec M_h(t))+ \vec F_{\textnormal{mag}}(t,\vec M_h(t)),
\end{align}
the simplest sequential operator splitting on $t^n\leq t \leq t^{n+1}$ gives
\begin{subequations}\label{eq:operatorsplitting}
\begin{align}
\dod{\vec M_h^*(t)}{t}&=\vec F_{\textnormal{adv}}(t,\vec M_h^*(t)),&&\vec M_h^*(t) = \vec M_h(t^n), \\
\dod{\vec M_h^{**}(t)}{t}&=\vec F_{\textnormal{mag}}(t,\vec M_h^{**}(t)),&&\vec M_h^{**}(t) = \vec M_h^*(t^n).
\end{align}	
\end{subequations} 
An inspection of \labelcref{eq:operatorsplitting} shows for the splitting error at $t=t^n$
\begin{align}
\bm\epsilon_s = \frac{1}{2}\tau_n^2[\dpd{\vec F_{\textnormal{adv}}}{\vec M}\vec F_{\textnormal{mag}} - 
    \dpd{\vec F_{\textnormal{mag}}}{\vec M}\vec F_{\textnormal{adv}}] +\bigO{\tau_n^3}.
\end{align}
It turns out that the commutation error $[\vec F_{\textnormal{adv}}, \vec F_{\textnormal{mag}}] =
        \dpd{\vec F_{\textnormal{adv}}}{\vec M_h}\vec F_{\textnormal{mag}} - \dpd{\vec F_{\textnormal{mag}}}{\vec M_h}\vec F_{\textnormal{adv}}$ 
can be written as
\begin{align}
[\vec F_{\textnormal{adv}}, \vec F_{\textnormal{mag}}]= (\nabla \cdot \vec u)[\vec F_{\textnormal{mag}}(t,\vec M_h)-
  \dpd{\vec F_{\textnormal{mag}}}{t}(t,\vec M_h)\vec M_h(t)] + (\vec u\cdot \nabla_{\vec r})\vec F_{\textnormal{mag}}(t,\vec M_h).
\end{align}
It vanishes if either $\vec F_{\textnormal{mag}}$ is independent of $\vec r = (x, y, z)^T $ and 
$\mbox{div}~\vec u = 0$ or  $\vec F_{\textnormal{mag}}$ is independent of $\vec r$ 
and linear in $\vec M_h$, see \cite{Hundsdorfer2003}, sec IV.1.5. This is unfortunately not the case in this 
application. As a remedy, symmetric or Strang-Marchuk splitting can be applied which reduces the splitting 
error to $\bigO{\tau_n^2}$, see \cite{Hundsdorfer2003}.

\section{Numerical Experiments} \label{sec:experiments}
The simulation method was tested for both static objects, i.e. with $\vec u \equiv \vec 0$,
as well as flow experiments, i.e. with $\vec u \not\equiv \vec 0$. 

\subsection{Experimental Validation for Static Objects ($\vec u \equiv \vec 0$)} \label{subsec:5.1}

\begin{figure}[t]
  \centering
    \includegraphics[width=0.80\linewidth]{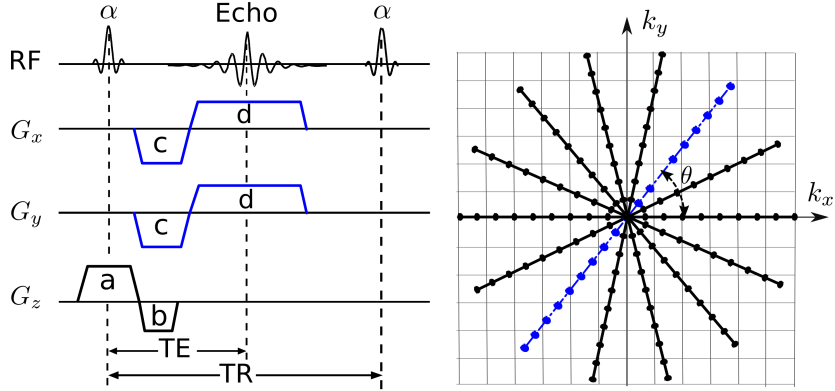}
  \caption{Generic spoiled gradient echo sequence with radial trajectory. Gradients: (a) slice selection 
      (b) rewinder (c) prephasing (d) readout. The dashed radial line (spoke) in the $\vec k$-space corresponds to the current repetition.} \label{fig:RadialFLASH}
  \end{figure}
  
We start with a test of the simulator for the static case, i.e. for $\vec u = \vec 0$. This means that 
mainly the temporal discretization is considered. The experiments here and later on (with exception of 
Subsec.~\ref{subsec:5.2}) were performed using a randomly spoiled \cite{Roeloffs2015spoiling} FLASH sequence 
\cite{Frahm1985verfahren,haase1986flash} with a radial trajectory as shown in \Cref{fig:RadialFLASH}. 
The left part depicts the time diagram of gradients and the right part shows the $\vec k$-space sampling 
trajectory where 
\[
 k_x :=\frac{\gamma}{2\pi}\int_0^tG_x(\tau)\dif{\tau}, \qquad  
 k_y:=\frac{\gamma}{2\pi}\int_0^tG_y(\tau)\dif{\tau}
\]
with $G_x = G_{\textnormal{max}}\cos(\theta)$, $G_y = G_{\textnormal{max}}\sin(\theta)$, 
$G_{\textnormal{max}} = \sqrt{G_x^2+G_y^2}$ and  angle $\theta$ of the spoke with $x$-axis.
%
Putting the expression of $\vec k$ in the demodulated signal equation, we can obtain the Fourier pair relation between the acquired electric signal and the transverse magnetization \cite{nishimura1996principles}. 
Each radial spoke in $\vec k$-space, which represents a projection of the object, is acquired with a repetition of pulse sequence where transverse gradients are changed according to orientation $\theta$ of the spoke in $\vec k$-space.
Certain number of spokes in $\vec k$-space are required to reconstruct an image frame.

Carr showed in \cite{Carr1958} that under constant flip angle $\alpha$, and gradient moment and constant TR the magnetizations reach a state of dynamic equilibrium after several repetitions. For clinical imaging, the acquisition starts only after the magnetizations reach dynamic equilibrium after several preparatory TR repetition.
 In order to test the simulations, time-series data of images from the beginning of the preparatory phase to dynamic equilibrium were obtained from experiments and compared with equivalent simulation results.

\begin{wraptable}{r}{0.4\textwidth}
	\centering
	\caption{Doped water tubes with their relaxation times}
	\label{table:staticphantom}
	\vspace{-0.5em}
	\begin{tabular}{|c c c|}
		\hline
		\rowcolor{gray}Tube & $T_1$ [\si{\ms}] &$T_2$ [\si{\ms}]\\
		\hline
		\rowcolor{lightgray}
		3 & 296 & 113 \\
		\rowcolor{lightgray}	4 & 463 & 53 \\
		\rowcolor{lightgray}	7 & 604 & 95 \\
		\rowcolor{lightgray}	10 & 745 & 157\\
		\rowcolor{lightgray}	14 & 1034  & 167\\
		\rowcolor{lightgray}	16 & 1276 & 204\\
		\rowcolor{lightgray}	water & 2700 & 2100\\
		\hline
	\end{tabular}
\end{wraptable}
To this end, an experiment was performed with a phantom as can be seen in the left part of \Cref{fig:correctedimage}) 
containing multiple compartments of doped water tubes with known $T_1$ and $T_2$ as listed in 
\Cref{table:staticphantom}.
The relevant pulse sequence parameters in \Cref{fig:RadialFLASH} are: TR/TE = \SI{2.18/1.28}{\ms}, 
flip angle $\alpha$ =\ang{8}, and number of spokes per image frame= \num{27}. \num{100} image frames from the beginning to the dynamic equilibrium of the experiment were used for comparison with simulation results.

{ Before embarking on the validation of our code with experimental results, it was tested with a Bloch equation simulator written in MATLAB by Sun \cite{Sun2012bloch} where the computationally expensive parts are written in C and connected via mex interface \cite{Sun2012bloch}. The code is an extended and corrected version of the code by Hargreaves \cite{Hargreaves2004bloch}. The numerical results from our code agree really well with  \cite{Sun2012bloch}.
	Baseline simulations were performed on a system with Supermicro SuperServer 4027GR-TR system with Ubuntu 14.04, 2x Intel Xeon Ivy Bridge E5-2650 main processors
	using an in-house C++ code and the implementation by Sun in Matlab R2016A. Though computationally RK5(4) solver is \num{4} times more expensive, the advantage of using RK5(4) is an easy higher order extension for flowing cases with any method of our choice for spatial discretization and we can avoid splitting error using a fully coupled approach.

Prior studies \cite{Shkarin1997time} have shown that, to model Bloch phenomena accurately, simulations must be carried out dividing each voxel with a number of cells where each of them represent one or an ensemble of isochromats; an isochromat is a microscopic group of spins which resonate at the same frequency. Shkarin et. al. showed in that \cite{Shkarin1997time} increasing the number of cells per voxel reduce the error of the solution.

For every simulation with relaxation time of specific liquids, the simulated data were recorded at TE since all isochromats are rephased at TE under the condition of approximately complete spoiling of residual transverse magnetization. 
The transverse magnetizations are integrated over all the isochromats and the magnitude of the summed up transverse magnetizations are averaged further for the number of spokes per image frame. In principle, this averaged integrated transverse magnetization should be equivalent to the averaged magnitude of image over a region in a specific tube. 
%
}

%
Simulations were performed over a domain of \SI[product-units = power]{4.8 x 4.8 x 18.0}{\mm} 
(corresponding to $3 \times 3$ pixels in the $xy$-plane and three-times the nominal slice thickness=\SI{6}{\mm}
in $z$-direction for each tube) divided into \num{27 x 27 x 45} cells. Each cell is assumed to consist 
of one isochromat as shown in Subsec. 3.4.3. in \cite{Hazra2016} that the number of isochromats in each cell do not effect accuracy of the simulation. The initial condition is chosen as $\vec M^0 = (0, 0, 1 )^T$. The embedded RK5(4) scheme is applied for time discretization. The time series of these two equivalent quantities are plotted as a function of image frame number in \Cref{fig:correctedimage} (right) for four randomly chosen tubes after normalizing the experimental and simulated data by their respective magnitude in dynamic equilibrium of the brightest tube (Tube 3).

%
 
\begin{figure}[t]
	\centering
	\includegraphics[width=1.0\linewidth]{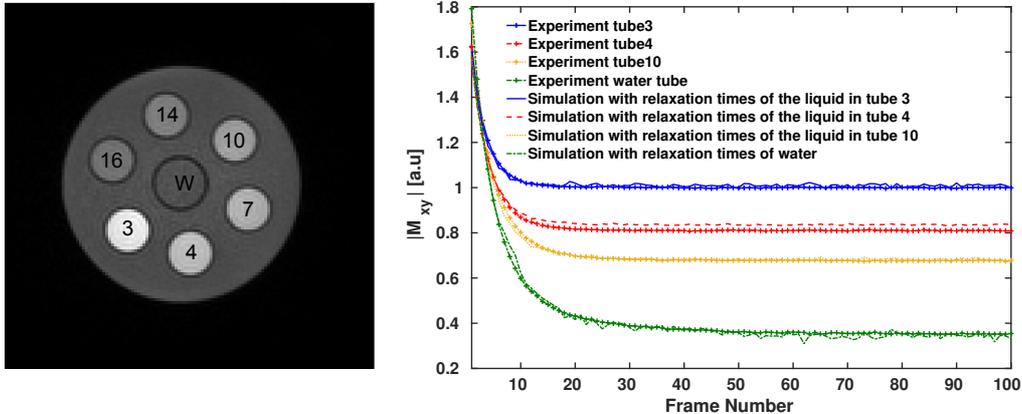}
	\caption{Left:  Image of the static phantom with resolution of \num{160x160} pixels. 
		Right: Comparison of simulation with the image for four different liquids.}
	\label{fig:correctedimage}
\end{figure}

The excellent agreement between simulation and experiment hints at the possible use of the simulator for quantitative estimations in MRI, e.g. the relaxation times $T_1$, $T_2$ or concentration of contrast agents required for certain signal enhancement (illustrated with an example in \cite{Hazra2016}, Sec. 5.4). 

Also, the time evolution of magnetizations for different isochromats are independent of
each other and suitable for Graphical Processing Unit (GPU) computing. A speed-up of 82 was achieved with GPU parallelization on the previously mentioned system together with a 
NVIDIA GTX Titan Black (Kepler GK110) GPU as illustrated in Subsec. 3.3.4 of \cite{Hazra2016}.

\subsection{A Basically One-dimensional Test Case for Flowing Spins} \label{subsec:5.2}
The simulator is tested further for flowing spins, i.e. $\mathbf{\vec u} \not\equiv \vec 0$. As first case, we consider a basically one-dimensional test case in \cite{Yuan1987solution} where they
studied for $z \in (-\frac{L}{2},\frac{L}{2})$ the effect of an RF pulse on the magnetization for 
different through-plane velocities, i.e. component $u_z$ in $z$-direction, using the
leap-frog finite difference scheme.

\begin{wrapfigure}{l}{0.40\textwidth}
	\centering
	\includegraphics[width=0.9\linewidth]
	{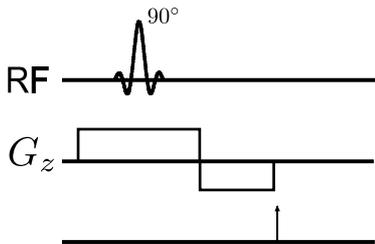}
	\caption{A \ang{90} slice-selective pulse was used for the studying the flow-effects. 
		The arrows indicate the time when the data was recorded.}
	\label{fig:yuanpulseseq}
\end{wrapfigure}

As shown in \Cref{fig:yuanpulseseq}, a  Blackman-windowed sinc pulse with an amplitude of \SI{0.1750}{\gauss}, 
flip angle of \ang{90} and duration of \SI{2.6794}{\ms}, slice selection gradient 
$G_z = \SI{1.0}{\gauss \per \cm}$, and a nominal slice thickness of $\SI{7}{\mm}$ were used for the 
simulation in \cite{Yuan1987solution}. 
Through-plane velocities were in the range of \SIrange{0}{200}{\cm\per\s}. The simulations were performed 
over a length of $L=\SI{20}{\mm}$ and $L=\SI{30}{\mm}$, respectively, in the slice direction $z$, divided into 
\num{800} cells of size \SI{0.025}{\mm} and \SI{0.0375}{\mm} for the range of velocities 
\SIrange{0}{80}{\cm\per\s} and \SIrange{80}{200}{\cm\per\s} respectively. 
The time duration of simulations was divided into \num{4500} equidistant time steps of 
$\tau_n=\SI{8.9313e-04}{\ms}$.
The magnetizations were calculated at the end of the rewinder gradient marked by an arrow in 
\Cref{fig:yuanpulseseq} 

In this work, the simulations were performed with the FEM-package COMSOL Multiphysics using the dG-FEM with 
quadratic elements ($k=2$), see \Cref{sec:semidisc}. The 
domain  of 
L=\SI{30}{\mm} was divided into 
\num{200} equidistant cells for the velocity range \SIrange{0}{200}{\cm\per\s}.
The fully coupled semi-discretized equations were further solved using the RK5(4) scheme.
For relative and absolute tolerance of \num{1.0E-6} and \num{1.0E-8} respectively, the adaptive time 
stepping resulted in \numrange{340}{528} time steps in the specified velocity range. The variability of 
time step sizes due to the embedded RK is depicted in \Cref{fig:timeadaptivity} (Left) and { 
the figure also clearly shows sudden reduction in the time step size due to time adaptivity to cater 
for jump in $G_z$. Also, the number of time steps got reduced by an order of magnitude due to time 
adaptation of the embedded RK scheme.
 
 { In order to estimate grid convergence, simulations were performed for \numlist[
 	list-final-separator = { and }
 	]{16;32;64;128;256} equidistant grid points in z-direction with  $u_z = \SI{80}{\cm \per \s}$. 
       The solution with the finest grid was chosen as the reference for the estimation of errors for $M_y$. 
\Cref{fig:timeadaptivity} (Right) shows the error order and the results show faster convergence rate 
than the predicted theoretical estimate.}
 }
 

 \begin{figure}[!h]
 	\centering
 	\includegraphics[width=1.0\linewidth]{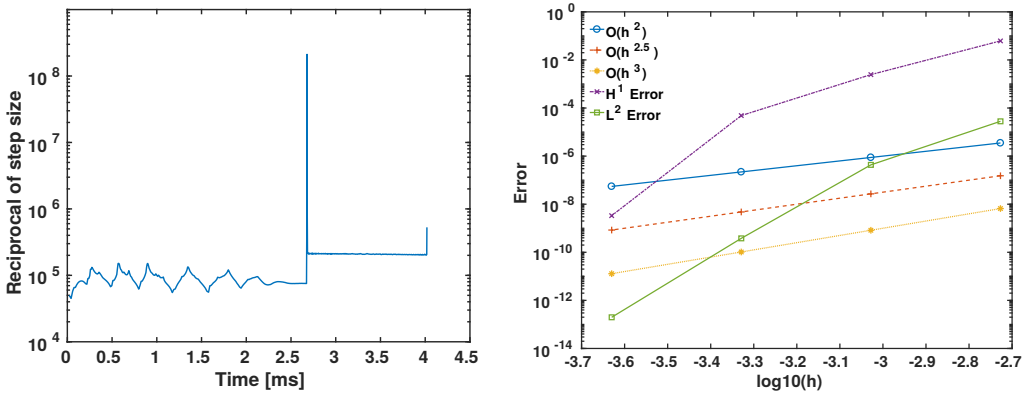}
 	\caption{Left: Reciprocal of variable time step size as a function of time duration for $u_z= \SI{200}{\cm\per\s}$. Right: Numerical errors  estimated with different grid sizes
 comparing with the reference solution obtained dividing
 	the length in $z$-direction into \num{256} grid points and with $u_z=\SI{80}{\cm\per \s}$. Different error estimates are plotted in logarithmic scale as a function of logarithm of grid sizes.}
 	\label{fig:timeadaptivity}
 \end{figure}

 \vspace{1em}
The penalty diffusion term $p_\epsilon$ with constant $\tilde \epsilon = \num{5.0E-04}$ was used to remove 
unphysical oscillations appearing in the neighborhood of inflow boundary for $M_x$ and $M_y$. Such oscillations 
for $M_y$ are shown in \Cref{fig:unphysical}. Similar oscillations were observed for $M_x$ but not for
$M_z$.

\begin{figure}[!t]
	\includegraphics[width=0.90\linewidth]{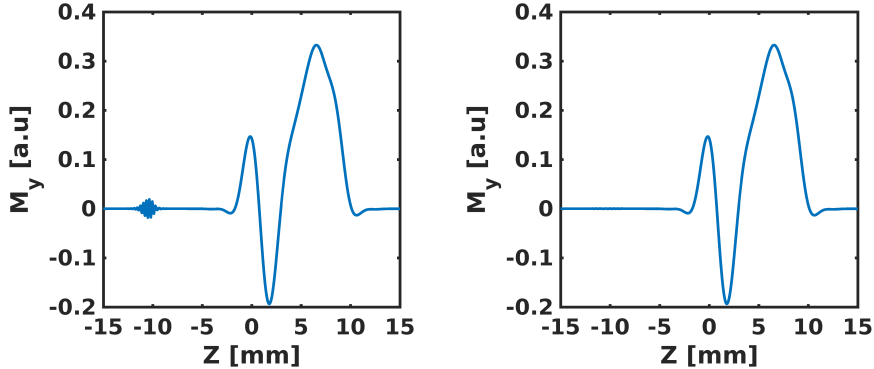}
	\caption{Simulation for $u_z=\SI{160}{\cm \per \s}$. Left: with $\epsilon=0$ results in unphysical oscillations in the neighborhood of inflow boundary 
		for $M_y$. Right: Unphysical oscillations vanish after the introduction of a small artificial 
		diffusion term with $\tilde \epsilon = \num{5.0E-4}$.} 
	\label{fig:unphysical}
\end{figure}



%
Finally, the results are compared with \cite{Yuan1987solution} and they show extremely good agreement. 
The effect of velocity $u_z$ in the range \SIrange{0}{200}{\cm\per\s} on magnetization is shown 
in \Cref{fig:yuanhi}.

\begin{figure}[!t]
	\includegraphics[width=0.90\linewidth]{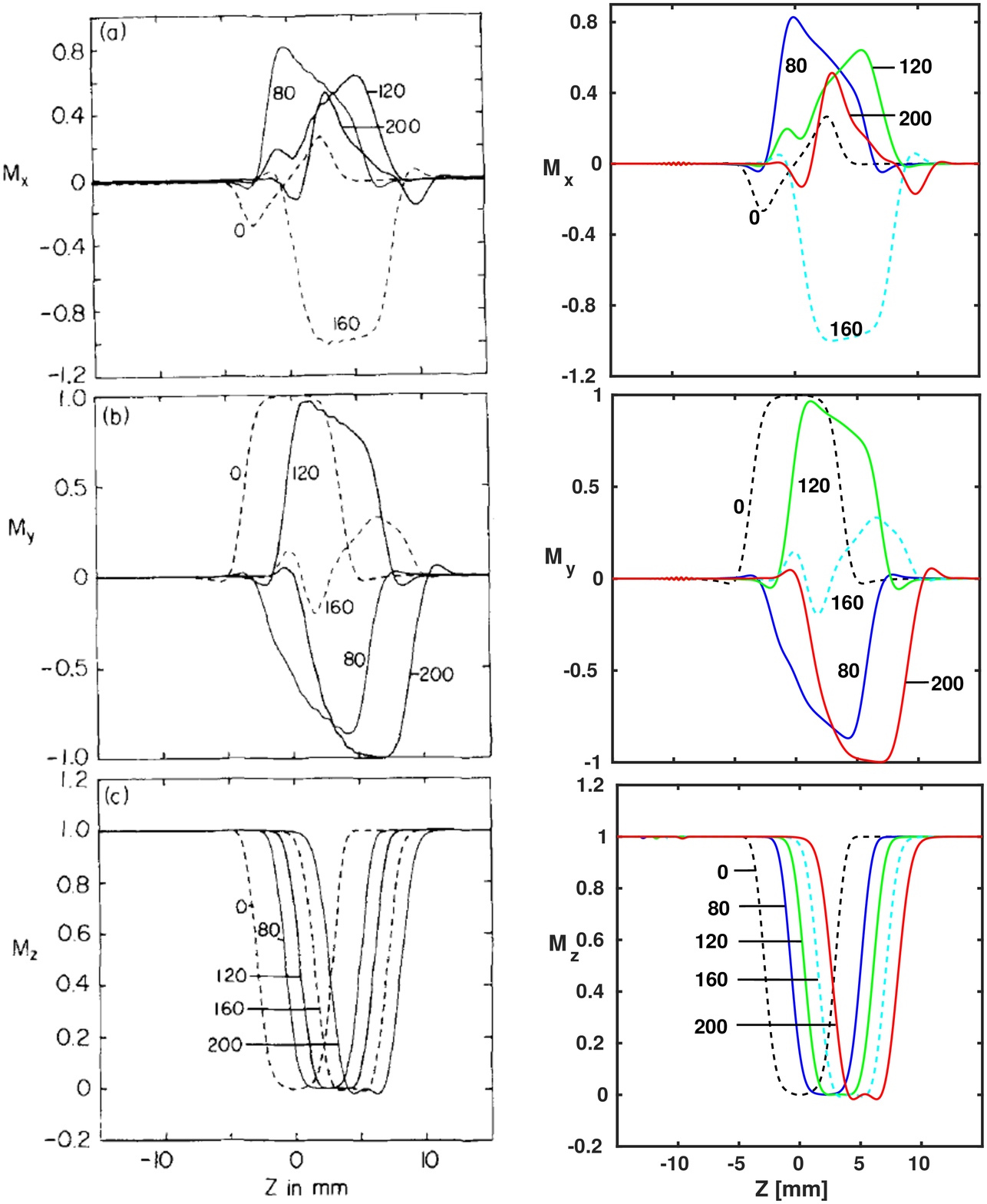}
	\caption{Simulated magnetization distributions of $M_x$, $M_y$, $M_z$ for through-plane velocity $u_z$ 
		along the positive $z$-axis in the range \SIrange{0}{200}{\cm\per\s} using dG-method (right) are 
		compared with the results in \cite{Yuan1987solution} (Left). The magnetizations were recorded at the 
		end of rewinder gradient as marked by the arrow \Cref{fig:yuanpulseseq}. The length in the slice 
		direction is in the range \SIrange{-15}{15}{\mm}.}
	\label{fig:yuanhi}
\end{figure}

\subsection{Comparison with Experiments for Through-plane Flow} \label{subsec:5.3}
The simulator was evaluated further against a laminar flow experiment in a circular tube. 
In the experiment, the flow pump was operated at different voltages to produce velocities such that 
the flow profile could be expected to be laminar i.e. Reynolds number $Re\leq 2300$ as listed in 
\Cref{table:pumpoperation}.

The flow velocities were estimated pixelwise using phase contrast MRI (PC MRI) \cite{Joseph2014}.
At each listed operating voltage in \Cref{table:pumpoperation}, the mean through-plane flow velocity $u_z$ 
was calculated over a pixel. The measured velocities show an unsteady pattern with a mean and a 
standard deviation as listed in the second and the third column of \Cref{table:pumpoperation}.\\
 
\begin{table}[h]
 \caption{Mean and standard deviation velocities and $Re$ based on the mean velocity for different 
          operating voltages of the flow pump at temperature \SI{16}{\celsius} (kinematic viscosity 
          $\nu = \SI{	1.1092e-02}{\cm^2\per \s}$).}
	\centering
	\begin{tabular}{|c c c c| }
		\hline
		\rowcolor{gray}
		Voltage [\si{\volt}] & Mean Velocity [\si{\mm\per\s}] &Standard Deviation 
		[\si{\mm \per \s}] & $Re$\\
		\rowcolor{lightgray}
		6 & 49.19 & 2.26 & 2217\\
		\rowcolor{lightgray}
		5 & 38.71 & 1.97 & 1744\\
		\rowcolor{lightgray}
		4 & 28.84 & 1.47 & 1300 \\
		\rowcolor{lightgray}
		3 & 18.52 & 1.04&834\\
		\hline
	\end{tabular}
	\label{table:pumpoperation}
\end{table}

\vspace{1em}

\begin{figure}[t]
 \centering
\includegraphics[width=1.0\linewidth]{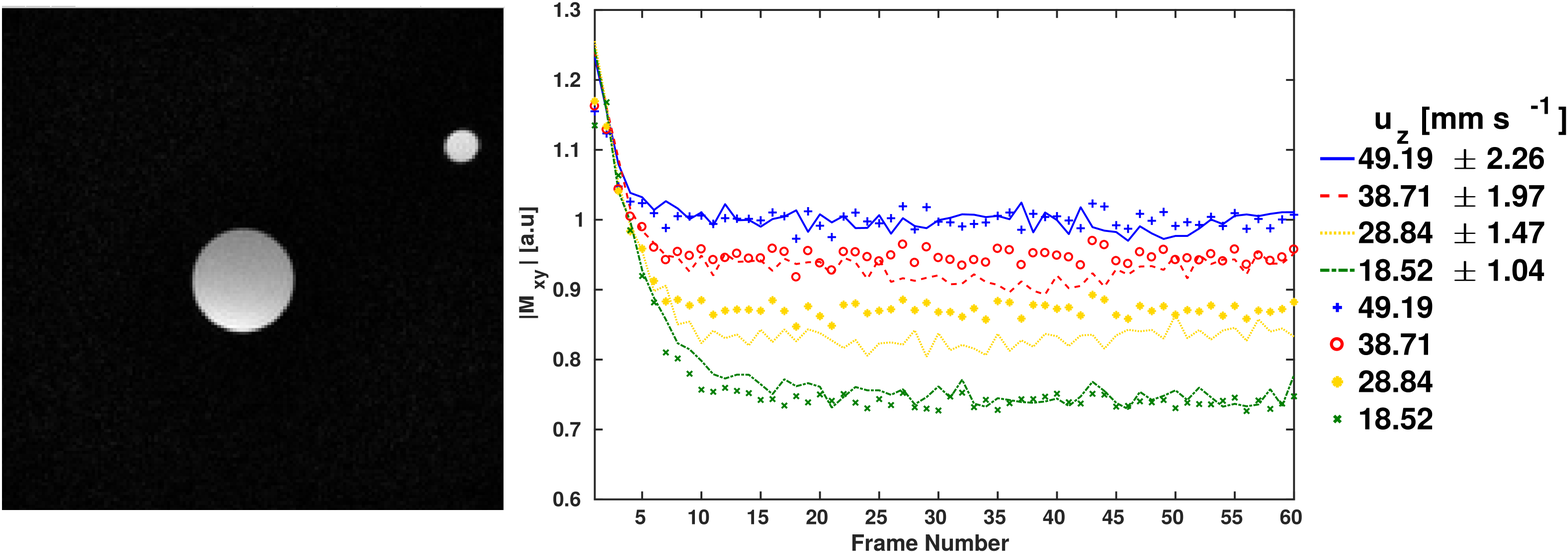}
  \caption{Left: MR image of the flow tube. Right: Experiments with different velocities are compared with the simulations.}
	\label{fig:constflowsetup}
\end{figure} 

At each specified flow velocity $u_z$, measurements were performed with the FLASH pulse sequence 
parameters TR/TE = \SI{1.96/1.22}{\ms}, flip angle = \ang{8}, and number of spokes per image frame = \num{17}. 
The resolution of one pixel in the $xy$-plane is \SI[product-units = power]{1.6 x 1.6}{\mm}
whereas the nominal slice thickness in $z$-direction is \SI{6}{\mm}. The time series of averaged magnitude 
is recorded over the same region of interest as the region of flow velocity measurement. \num{60} frames from the beginning of experiments towards dynamic equilibrium were used for the comparison with simulation.

The mean velocity $\vec u = (0, 0, u_z)^T$ was taken as input velocity in the simulation. 
A computational domain of \SI[product-units = power]{1.6 x 1.6 x 18.0}{\mm}, divided into \num{6 x 6 x 27} identical cells, was chosen for the simulation. The length of domain in the through-plane flow direction was estimated from previous simulation results \cite{Hazra2016} such that 
$\vec M_{\Gamma_{-}} = (0, 0, 1)^T$, i.e. the Dirichlet inflow boundary condition, could be satisfied. 
The initial condition was chosen as in the static case. The spatial discretization was performed using 
dG-FEM with quadratic elements ($k=2$) whereas in time the fully coupled approach with the RK5(4)-scheme 
for time discretization was applied. Again the penalty diffusion term $p_\epsilon$ with constant 
$\epsilon = \num{5.0E-04}$ was used. 

For comparison, the experimental and simulated data are normalized properly first. Due to unsteadiness of the flow, the experimental data was normalized by the average magnitude of last \num{20} frames from the measurement with the operating voltage of \SI{6}{\volt} in 
\Cref{table:pumpoperation}. The simulated data was normalized similarly. After that, they are plotted in the right part of \Cref{fig:constflowsetup}.

In spite of the unsteadiness in the flow which may be attributed to the lack of sufficient entry length, as 
discussed in \cite{Hazra2016}, the simulation and the experiment show a reasonable agreement.

Like the static phantom, GPU computing can be used for the flowing fluid as well to achieve a significant speed up 
as explained in Subsec. 4.4.3 of \cite{Hazra2016}.

\subsection{Comparison with Pulsatile Flow Experiments} \label{subsec:5.4}

The simulation method was evaluated further with a pulsatile flow laboratory experiment. Like in the 
previous case, the pulsatile velocity profile was estimated using PC MRI. A through-plane profile was 
fitted as a function of time using Matlab curve-fitting toolbox 
as shown in the left part of \Cref{fig:pulsatile}. The fitted pulsatile flow profile was used as 
input velocity in the simulation.
The effect of pulsatile flow on the evolution of magnitude from the image was studied. An experiment was performed using the pulse sequence identical to the previous experiment.

\begin{figure}[htb!]
  \centering
  \includegraphics[width=1.0\linewidth]{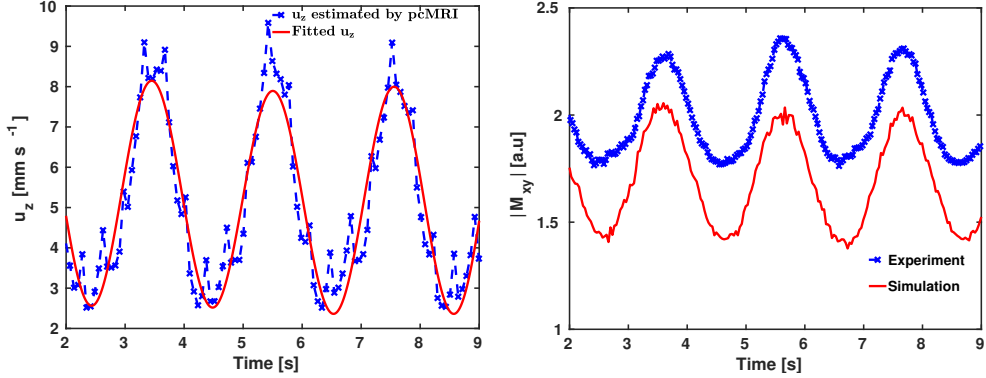}
  \caption{Left: Fitted through-plane velocity profile from PC MRI data. Right: Comparison between experimental 
           and simulation results for studying the effect of pulsatile flow.}
  \label{fig:pulsatile}
\end{figure}
\vspace{1em}

The simulation was performed with identical domain, spatial and temporal discretization, and penalty diffusion 
as used in the previous experiment. 
Experimental data was normalized with the magnitude data in dynamic equilibrium from the spatially 
stationary tube under identical experimental conditions. 
The simulated data was normalized by the averaged magnitude of integrated transverse magnetization of static water in dynamic equilibrium. The experiment and the simulation are compared in a state of dynamic equilibrium in \Cref{fig:pulsatile} (right).

\Cref{fig:pulsatile} shows that, although the periodicity in the experimental and the simulated result agree well, the amplitude of experimental results deviates from the simulation.
The deviation could be due to the assumption of a flow profile which depends only on time. The assumption implies that the fluid needs to move in bulk, i.e., the tube must respond simultaneously to the changing pressure 
at all positions at every specific point of time in the direction of through-plane flow such
that through-plane velocities at every position in the longitudinal direction are same, which is artificial 
and unphysical. 
 Nevertheless, it is a starting point to study the effect of more realistic pulsatile flows. 

\section{Summary. Outlook}

In this note, we proved the well-posedness of the {Bloch} model under the action of an incompressible 
flow field. Then we applied the discontinuous Galerkin method to the spatial semi-discretization of the
Bloch model and proved well-posedness and error estimates. The multiscale character of the problem 
basically stems from the high frequency time evolution of the magnetization part. An explicit Runge-Kutta
method together with time step adaption is applied for the temporal discretization. Alternatively,
an operator splitting between advection and magnetization can be applied. The computation can be 
strongly accelerated via GPU computing.

Magnetic resonance imaging is nowadays a very rapid process which can be done in real-time \cite{uecker2010real}.
Nevertheless, there are still unsolved problems such as a quantitative understanding of the mechanisms
that lead to MRI signal alterations (i.e., both enhancement and loss) when imaging flowing spins (e.g., 
in vessels or the heart) or other dynamic processes.
Here, numerical simulation with the proposed direct solver for the Bloch model can help in a better 
understanding of such dynamic processes as shown for some basic MRI experiments.
Apart from flow velocities and volumes, 
there is an increasing demand in MRI for quantitative information such as relaxation time constants. 
In future, access to both high-contrast imaging and quantitative parametric mapping by MRI is expected 
to facilitate and contribute to computer-aided diagnostic strategies.

\section*{Acknowledgement}
The first author thanks Prof. Dr. J. Frahm of the Biomedizinische NMR Forschungs GmbH (BiomedNMR) for 
suggesting the topic of MRI and many helpful discussions and Dr. D. Voit (BiomedNMR) for stimulating discussions on various occasions and numerous guidance for the experiments. 
We thank P. Schroeder for his helpful remarks. Moreover, we would also like to thank two anonymous referees for their helpful comments.

%
%
%

\addcontentsline{toc}{section}{References}



\bibliographystyle{elsarticle-num}
\bibliography{ref}

%
\end{document}